\theoremstyle{plain}
\newtheorem{theorem}{Theorem}[section]
\newtheorem*{theorem*}{Theorem}
\newtheorem{lemma}[theorem]{Lemma}
\newtheorem{proposition}[theorem]{Proposition}
\newtheorem{corollary}[theorem]{Corollary}
\theoremstyle{definition}
\newtheorem{definition}[theorem]{Definition}
\theoremstyle{remark}
\newtheorem{remark}[theorem]{Remark}
\numberwithin{equation}{section}
\newcommand{\C}{\mathbb{C}}
\newcommand{\R}{\mathbb{R}}
\newcommand{\Z}{\mathbb{Z}}
\newcommand{\V}{\mathbb{V}}
\newcommand{\HH}{\mathbb{H}}
\newcommand{\eps}{\varepsilon}
\newcommand{\mc}{\mathcal}
\newcommand{\la}{\lambda}
\newcommand{\dd}{\mathrm{d}}
\newcommand{\e}{\mathbf{e}}
\DeclareMathOperator{\WF}{WF}
\newcommand{\be}{\begin{equation}}
\newcommand{\ee}{\end{equation}}
\title
[Marked length spectrum rigidity for Anosov surfaces]
{Marked length spectrum rigidity for Anosov surfaces}
\author[Guillarmou]{Colin Guillarmou}
\address{Universit\'e Paris-Saclay, CNRS,  Laboratoire de math\'ematiques d'Orsay, 91405, Orsay, France.}
\email{colin.guillarmou@universite-paris-saclay.fr}
\author[Lefeuvre]{Thibault Lefeuvre}
\address{Université de Paris and Sorbonne Université, CNRS, IMJ-PRG, F-75006 Paris, France.}
\email{tlefeuvre@imj-prg.fr}
\author[Paternain]{Gabriel P. Paternain}
\address{Department of Pure Mathematics and Mathematical Statistics, University of
Cambridge, Cambridge CB3 0WB, UK, and Department of Mathematics, University of Washington, Seattle, WA 98195, USA.}
\email{g.p.paternain@dpmms.cam.ac.uk}
\begin{document}

\begin{abstract}
Let $\Sigma$ be a smooth closed oriented surface of genus $\geq 2$. We prove that two metrics on $\Sigma$ with the same marked length spectrum and Anosov geodesic flow are isometric via an isometry isotopic to the identity. 
The proof combines microlocal tools with the geometry of complex curves.
\end{abstract}

\maketitle

\section{Introduction}

\subsection{Main results}

On a smooth closed connected oriented manifold $\Sigma$, a metric $g$ is \emph{Anosov} if its geodesic flow on the unit tangent bundle $S\Sigma$ satisfies the Anosov property defined in \eqref{equation:anosov}. While negatively curved metrics are typical examples of Anosov metrics, the set of Anosov metrics is a considerably larger open set compared to that of negatively curved metrics (cf. \cite{Eberlein-73}). However, this set is less well understood: for instance it is still unknown whether the set of Anosov surfaces is path-connected or whether any manifold (of dimension $\geq 4$) that admits an Anosov metric also admits a negatively curved metric.  Anosov metrics coincide with the $C^2$-interior of metrics without conjugate points \cite{Ruggiero-91}, and they can also exist isometrically embedded in $\mathbb{R}^3$ \cite{Donnay-Pugh-03}, unlike negatively curved metrics.

On $\Sigma$, we shall denote by $\mc{M}_{\mathrm{Anosov}}(\Sigma)$ the set of all Anosov metrics (assuming it is non-empty) and by $\mathbf{M}_{\mathrm{Anosov}}(\Sigma) := \mc{M}_{\mathrm{Anosov}}(\Sigma)/\mathrm{Diffeo}^0(\Sigma)$ the \emph{moduli space} of Anosov metrics, where we denote by $\mathrm{Diffeo}^0(\Sigma)$ the group of diffeomorphisms isotopic to the identity.
Let $\mc{C}$ be the set of free homotopy classes on $\Sigma$. This set is countable and in natural correspondence with conjugacy classes of $\pi_1(\Sigma,\star)$. It is well-known that for an Anosov metric $g$ there exists a unique closed geodesic $\gamma_g(c)$ in each free homotopy classes $c \in \mc{C}$ (see \cite{Klingenberg-74} or \cite[Theorem 29]{CIPP00} for a proof that covers Anosov energy levels of Lagrangian systems). The marked length spectrum is then defined as the map
\begin{equation}
\label{equation:mls}
\mc{L} : \mathbf{M}_{\mathrm{Anosov}}(\Sigma) \to (0,\infty)^{\mc{C}}, \qquad \mc{L}_g(c) := \ell_g(\gamma_g(c)),
\end{equation}
where $\ell_g(\gamma)$ denotes the length of the curve $\gamma$ computed with respect to $g$.

It is conjectured that the marked length spectrum map \eqref{equation:mls} is injective. For negatively curved metrics, this is known as the Burns-Katok conjecture \cite{Burns-Katok-85}. The purpose of the present paper is to establish this conjecture when $\dim \Sigma =2$, that is, when $\Sigma$ is a surface.

\begin{theorem}
\label{theorem:main}
Let $\Sigma$ be a smooth closed connected oriented surface. Then the marked length spectrum $\mc{L} : \mathbf{M}_{\mathrm{Anosov}}(\Sigma) \to (0,\infty)^{\mc{C}}$ is injective. In other words, if $g_1$ and $g_2$ are two Anosov metrics on $\Sigma$ with same marked length spectrum (that is, $\mc{L}_{g_1} = \mc{L}_{g_2}$), then there exists a smooth diffeomorphism $\phi : \Sigma \to \Sigma$, isotopic to the identity, such that $\phi^* g_1 = g_2$.
\end{theorem}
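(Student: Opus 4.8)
The plan is to prove the statement by combining an infinitesimal (deformation) rigidity result, obtained through microlocal analysis of the geodesic X-ray transform, with a global argument that converts the \emph{equality} of marked length spectra into a smooth isometry, using the complex-analytic structure attached to a surface of genus $\geq 2$.

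First I would record the variational mechanism. For a smooth one-parameter family $g_t$ with $\dot g_0 = f$ a symmetric $2$-tensor, the first variation of the length of the closed geodesic in the class $c$ is
\[
\frac{d}{dt}\Big|_{t=0}\mc{L}_{g_t}(c) = \frac{1}{2\,\mc{L}_{g_0}(c)} \int_{\gamma_{g_0}(c)} f(\dot\gamma,\dot\gamma)\,dt = \tfrac12\, I_2^{g_0} f(c),
\]
where $I_2^{g}$ is the geodesic X-ray transform on symmetric $2$-tensors. Equality of marked length spectra along a family forces $I_2 f$ to vanish on every periodic orbit, and a Livšic-type argument promotes this to $I_2 f \equiv 0$. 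The solenoidal injectivity of $I_2$ on an Anosov surface — proved via the Pestov energy identity after decomposing $L^2(S\Sigma)$ into fibrewise Fourier modes, i.e. sections of the powers $\kappa^{\otimes k}$ of the canonical line bundle, with the associated raising and lowering operators $\eta_\pm$ — then shows that $f$ is potential, $f = D h$ for a $1$-form $h$, so the deformation is trivial modulo $\mathrm{Diffeo}^0(\Sigma)$. This yields local rigidity.

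The central difficulty is \emph{globalization}: one cannot join $g_1$ and $g_2$ by a path of Anosov metrics with constant marked length spectrum — indeed, even path-connectedness of $\mc{M}_{\mathrm{Anosov}}(\Sigma)$ is unknown — so a naive continuity method fails. Instead I would work directly with the two flows on the common $3$-manifold $M = S\Sigma$. By the thermodynamic formalism, the equality $\mc{L}_{g_1} = \mc{L}_{g_2}$ of periods should produce, via Livšic theory, a time-preserving conjugacy $\Psi\colon (M, \phi^1_t) \to (M, \phi^2_t)$ between the two geodesic flows, intertwining the generators, $\Psi_* X_1 = X_2$. The heart of the matter is to upgrade this a priori only Hölder-continuous $\Psi$ to a smooth map and to recognize it as the lift of a genuine isometry of $\Sigma$.

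For the regularity upgrade I would use the microlocal theory of the geodesic vector field — anisotropic Sobolev spaces, the meromorphic structure of the resolvent $(X + s)^{-1}$ and its Pollicott–Ruelle resonances — together with the cohomological equation satisfied by $\Psi$, to bootstrap smoothness. Finally I would exploit the complex structure: decomposing along the fibrewise modes $\kappa^{\otimes k}$, a smooth time-preserving conjugacy must match the contact structures and, mode by mode, identify first the conformal (complex) structures of $g_1$ and $g_2$ — where the rigidity of holomorphic quadratic differentials on the Riemann surface enters — and then the conformal factors, yielding $\phi^* g_1 = g_2$ for a $\phi$ isotopic to the identity. I expect the main obstacle to be precisely this globalization: producing and regularizing the conjugacy without any closeness of the two metrics, and converting the purely dynamical matching into the complex-geometric conclusion that the conformal structures, and then the full metrics, coincide.
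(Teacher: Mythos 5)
Your outline gets the first stage right, and it matches the paper: equality of marked length spectra plus Livšic theory upgrades the Hölder orbit equivalence to a time-preserving conjugacy $\Psi$ between the two geodesic flows, and this conjugacy is smooth. Here, though, no Pollicott--Ruelle bootstrap is needed: smoothness of conjugacies of three-dimensional contact Anosov flows is a quotable theorem (Feldman--Ornstein for $C^1$, de la Llave--Moriy\'on for $C^\infty$), which is exactly what the paper invokes in Proposition \ref{proposition:algebra-iso}. The infinitesimal rigidity paragraph, as you concede, plays no role in the endgame.

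The genuine gap is your final step, which is where the whole theorem lives. A conjugacy $\Psi : S\Sigma_1 \to S\Sigma_2$ is not fiber-preserving: it is not the lift of any map $\Sigma \to \Sigma$, and it does not intertwine the vertical vector fields $V_1$ and $V_2$. Consequently $\Psi^*$ does not act on fibrewise Fourier modes at all, so the proposed ``mode by mode'' identification of conformal structures has no meaning; and ``matching the contact structures'' only gives $\Psi^*\lambda_1 = \lambda_2$, which is very far from identifying complex structures. The paper's substitute for this is precisely its main innovation: (i) the space $\mc{A}_+(S\Sigma)$ of flow-invariant \emph{fibrewise holomorphic} distributions is preserved by $\Psi^*$ --- the point being that, although Fourier modes are not transported, the wavefront set constraint $\WF(f)\subset\mc{C}$ (the cone bounded by $(E^s)^*$ and $(E^u)^*$) \emph{is} transported by the conjugacy, and the tensor tomography Theorem \ref{theorem:tensor} then kills the negative modes of the pullback; (ii) holomorphic $1$-forms (not quadratic differentials, which are the relevant objects only for the infinitesimal problem) arise as first Fourier modes of elements of $\mc{A}_+$, via the nontrivial surjectivity of ${\pi_1}_*$ due to Paternain--Salo--Uhlmann; (iii) the pairing formula of Lemma \ref{lemma:magical}, $2i\int_{S^1\gamma} f\,\dd\lambda = -\int_{[\gamma]} {\pi_1}_* f$, shows that the induced map on holomorphic $1$-forms preserves periods, so the period matrices agree and the Torelli theorem applies. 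Even granting all this, one only obtains agreement of the complex structures in \emph{moduli} space, i.e.\ up to an arbitrary mapping class --- not up to isotopy, as your conclusion requires; the paper removes this ambiguity by repeating the argument on suitable finite covers (Lemma \ref{lemma:cover}), and only then disposes of the conformal factor by Katok's theorem for conformal metrics with equal marked length spectrum. None of these mechanisms --- the invariant-distribution class, the period recovery, the Torelli plus finite-cover step, the Katok endgame --- is present in, or recoverable from, what you sketch, so the proposal does not close.
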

The result also applies to metrics with $C^4$-regularity, see Remark \ref{remark:regularity}.

The theorem can be seen as the closed surface analogue of Pestov and Uhlmann's celebrated boundary rigidity result for simple surfaces \cite{Pestov-Uhlmann-05}. A surface is simple if it has strictly convex boundary, is non-trapping, and has no conjugate points (like the Anosov property, this is a $C^2$-open condition in the metric). In this case, there is a unique geodesic joining any two boundary points, similar to the boundary at infinity of the universal cover of an Anosov surface (which is a disk). The boundary rigidity problem seeks to determine the metric (up to an isometry that is identity on the boundary) from the boundary distance function.

Equality of boundary distance functions for two simple metrics induces a conjugacy between their geodesic flows fixing the boundary of the unit tangent bundle. Similarly, equality of marked length spectra for Anosov surfaces induces a conjugacy between their geodesic flows homotopic to the identity (i.e. fixing the boundary at infinity of the universal cover). A potential approach to both problems is to show that the conjugacy is the lift of an isometry (up to an innocuous action of one of the two flows). Croke \cite{Croke-90} and Otal \cite{Otal:1990ko,Otal-90} used this approach to prove both boundary rigidity and marked length spectrum rigidity for non-positively curved surfaces. In contrast, Pestov and Uhlmann \cite{Pestov-Uhlmann-05} showed that the scattering relation determines the boundary values of holomorphic functions, and relied on the solution of the Calder\'on problem to prove boundary rigidity for all simple metrics. 
The problem of marked length spectrum rigidity for Anosov surfaces in the closed setting has remained open until the present work.
A {\it posteriori}, we will also prove that conjugacies must have the expected form, see Corollary \ref{corollary} below.

Our proof draws inspiration from Pestov-Uhlmann's approach that depends on utilizing ``fiberwise holomorphic'' smooth invariant functions for the geodesic flow to recover the conformal structure of the surface (\textcolor{black}{after a suitable reinterpretation of their original proof}). However, in the case of closed surfaces, there is no natural Calderón problem connected to the marked length spectrum as in \cite{Pestov-Uhlmann-05}, and the smooth invariant functions need to be replaced by suitable singular invariant distributions, which has made it difficult to apply this strategy in our context until now. We take a different route and use the period matrix to recover the conformal structure via the Torelli theorem, see the strategy outline below.
{\color{black}The boundary rigidity for simple surfaces in fact follows from Theorem \ref{theorem:main} in conjunction with the embedding theorem \cite{Chen-Erchenko-Gogolev-20}; this was established by Erchenko and the second author in \cite{Erchenko-Lefeuvre-24}.}

As mentioned above, prior to Theorem \ref{theorem:main}, the only known cases of injectivity of the marked length spectrum for closed surfaces were essentially limited to metrics with non-positive curvature (see \cite{Croke-90,Otal-90} and the subsequent generalization by Croke, Fathi, and Feldman \cite{Croke-Fathi-Feldman-92}). These proofs {\it do not} extend to the Anosov setting as they rely crucially on the assumption that the Gauss curvature is non-positive (or that the Morse correspondence preserves angles); thus a new approach is needed to establish Theorem \ref{theorem:main}.
 For dimensions greater than or equal to three, fewer results exist. Given two Anosov metrics in the same conformal class, Katok's argument in \cite{Katok-88} provides injectivity of the marked length spectrum. Almost all other results require non-positive sectional curvature: Hamenstädt \cite{Hamenstadt-99} proved the conjecture when one of the two metrics is locally symmetric, relying on the entropy paper of Besson-Courtois-Gallot \cite{Besson-Courtois-Gallot-95}. Recently, the first two authors of this paper proved the conjecture in any dimension when the curvature is non-positive \cite{Guillarmou-Lefeuvre-19} and 
 the metrics are assumed to be close. An alternative proof based on the concept of geodesic stretch can be found in \cite{Guillarmou-Knieper-Lefeuvre-22}. Finally, in the Anosov setting and without requiring curvature assumption, the second author and Cekić proved in \cite[Corollary 1.4]{Cekic-Lefeuvre-21-2} that the marked length spectrum is locally injective near a generic Anosov metric in any dimension.
 
 As a historical remark, we note that the easier question of {\it infinitesimal} marked length spectrum rigidity was first resolved for negatively curved surfaces in a landmark paper by Guillemin and Kazdhan \cite{Guillemin-Kazhdan-80}. This paper has had a profound influence in the development of geometric inverse problems in two dimensions in the last two decades, see \cite{Paternain-Salo-Uhlmann-book}. We also note that {\it infinitesimal} marked length spectrum rigidity for Anosov surfaces was established in \cite{Paternain-Salo-Uhlmann-14-2} and this relies in turn on the possibility of lifting an arbitrary holomorphic 1-form to an invariant distribution; an important result, also used here.
 
As a direct consequence of Theorem \ref{theorem:main}, the proof of \cite[Theorem 1.1]{Croke-Dairbekov-04} provides a more general version of Theorem \ref{theorem:main} related to the minimal filling problem of Gromov, see \cite[Question 6.8]{Croke-04} where this is discussed.

\begin{corollary}
\label{theorem:main2}
Let $\Sigma$ be a smooth closed connected oriented surface and $g_1, g_2 \in \mathbf{M}_{\mathrm{Anosov}}(\Sigma)$. If $\mc{L}_{g_1} \geq \mc{L}_{g_2}$, then $\mathrm{vol}_{g_1}(\Sigma) \geq \mathrm{vol}_{g_2}(\Sigma)$ with equality of volumes if and only if there exists a smooth diffeomorphism $\phi : \Sigma \to \Sigma$, isotopic to the identity, such that $\phi^* g_1 = g_2$.
\end{corollary}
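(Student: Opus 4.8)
The plan is to combine the comparison theorem of Croke and Dairbekov \cite{Croke-Dairbekov-04} with the rigidity afforded by Theorem \ref{theorem:main}. One direction of the equality statement is immediate: if $\phi$ is a diffeomorphism isotopic to the identity with $\phi^* g_1 = g_2$, then $\phi$ preserves free homotopy classes and lengths, so $\mc{L}_{g_1} = \mc{L}_{g_2}$, and in particular $\mathrm{vol}_{g_1}(\Sigma) = \mathrm{vol}_{g_2}(\Sigma)$. It therefore remains to establish the inequality $\mathrm{vol}_{g_1}(\Sigma) \geq \mathrm{vol}_{g_2}(\Sigma)$ and to show that equality of volumes forces $\mc{L}_{g_1} = \mc{L}_{g_2}$.

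For the inequality I would argue as in \cite{Croke-Dairbekov-04}, taking $g_2$ as the reference metric: its geodesic flow $\varphi_t$ on $S_{g_2}\Sigma$ is Anosov, and I introduce the continuous function $a : S_{g_2}\Sigma \to (0,\infty)$ recording the $g_1$-speed of $g_2$-geodesics, namely $a(v) = \lvert v\rvert_{g_1}$ for a $g_2$-unit vector $v$. If $v_c$ is the initial vector of the closed $g_2$-geodesic in a free homotopy class $c$, then its $g_1$-length is $\int_0^{\mc{L}_{g_2}(c)} a(\varphi_t v_c)\,\dd t$; since $g_1$ is Anosov it has no conjugate points, so its closed geodesic minimizes $g_1$-length in $c$, giving $\mc{L}_{g_1}(c) \leq \int_0^{\mc{L}_{g_2}(c)} a(\varphi_t v_c)\,\dd t$. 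Together with the hypothesis $\mc{L}_{g_1}(c) \geq \mc{L}_{g_2}(c)$ this yields $\tfrac{1}{\mc{L}_{g_2}(c)}\int_0^{\mc{L}_{g_2}(c)} a(\varphi_t v_c)\,\dd t \geq 1$ for every closed orbit. Because $\varphi_t$ is Anosov, periodic orbit measures are dense in the simplex of invariant probability measures, so $\int_{S_{g_2}\Sigma} a\,\dd\mu \geq 1$ for every invariant measure $\mu$; feeding this into the integral-geometric inequality of Croke--Dairbekov, which compares the phase-space average of $a$ with the volume density $\dd\mathrm{vol}_{g_1}/\dd\mathrm{vol}_{g_2}$, produces $\mathrm{vol}_{g_1}(\Sigma) \geq \mathrm{vol}_{g_2}(\Sigma)$.

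For the equality case I would invoke the corresponding statement in \cite{Croke-Dairbekov-04}: when $\mathrm{vol}_{g_1}(\Sigma) = \mathrm{vol}_{g_2}(\Sigma)$ the chain of inequalities above must be saturated, which, by the equality analysis of the integral-geometric inequality together with the continuity of $a$ and the density of closed orbits, forces every closed $g_2$-geodesic to be $g_1$-length-minimizing with $\mc{L}_{g_1}(c) = \mc{L}_{g_2}(c)$ for all $c$; that is, $\mc{L}_{g_1} = \mc{L}_{g_2}$. At this point the genuinely new input enters: Theorem \ref{theorem:main} applies to the two Anosov metrics $g_1$ and $g_2$ with $\mc{L}_{g_1} = \mc{L}_{g_2}$ and furnishes a diffeomorphism $\phi$ isotopic to the identity with $\phi^* g_1 = g_2$, completing the converse direction.

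The substance of the argument is thus entirely imported: the inequality and the reduction of the equality case to $\mc{L}_{g_1} = \mc{L}_{g_2}$ are exactly \cite[Theorem 1.1]{Croke-Dairbekov-04}, whose hypotheses are met here since both metrics are Anosov (in particular without conjugate points), while the passage from equal marked length spectra to an isometry is Theorem \ref{theorem:main}. The main obstacle therefore lies not in the corollary itself but in Theorem \ref{theorem:main}: prior to it, \cite{Croke-Dairbekov-04} could only conclude equality of marked length spectra at equal volume, and the surface Anosov rigidity needed to upgrade this to an actual isometry was unavailable. The one point demanding care is to verify that the equality case of \cite{Croke-Dairbekov-04} indeed delivers the full equality $\mc{L}_{g_1} = \mc{L}_{g_2}$, and not merely a weaker (e.g.\ pointwise conformal) conclusion, so that Theorem \ref{theorem:main} can be applied verbatim.
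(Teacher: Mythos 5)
Your proposal is correct and is essentially the paper's own proof: the paper likewise obtains the volume inequality and the reduction of the equality case to $\mc{L}_{g_1}=\mc{L}_{g_2}$ by citing the proof of \cite[Theorem 1.1]{Croke-Dairbekov-04}, and then applies Theorem \ref{theorem:main} to upgrade equal marked length spectra to an isometry isotopic to the identity. One caveat on your sketch of the Croke--Dairbekov step: what their argument (and the paper's citation) actually relies on is the positive Liv\v sic theorem of Lopes--Thieullen \cite{Lopes-Thieullen-05}, which upgrades the closed-orbit inequality to a \emph{pointwise} bound $a - 1 \geq Xu$ for some function $u$; the weaker measure-averaged statement you derive from density of periodic orbit measures (that $\int a\, \dd\mu \geq 1$ for every invariant probability measure $\mu$) is not by itself sufficient for the fiberwise integral-geometric comparison or for the equality analysis, so it should not be presented as an equivalent substitute.
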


In Corollary \ref{theorem:main2}, the volumes of the metrics are computed with respect to the Riemannian measure. Corollary \ref{theorem:main2} follows from the positive Liv\v sic Theorem of Lopes and Thieullen \cite{Lopes-Thieullen-05} and Theorem \ref{theorem:main}, see \cite{Croke-Dairbekov-04} for a proof. Finally, Theorem \ref{theorem:main} also implies a classification result for smooth conjugacies of Anosov geodesic flows over surfaces:

\begin{corollary}
\label{corollary}
Let $\Sigma$ be a smooth closed connected oriented surface, let $g_1,g_2 \in \mathbf{M}_{\mathrm{Anosov}}(\Sigma)$ and denote by $(\varphi^{g_i}_t)_{t \in \R}$ the geodesic flow over the unit tangent bundle $S\Sigma_i$ of $(\Sigma,g_i)$ for $i=1,2$. If $\phi : S\Sigma_1 \to S\Sigma_2$ is a diffeomorphism such that
\[
\phi \circ \varphi^{g_1}_t = \varphi^{g_2}_t \circ \phi, \qquad \forall t \in \R,
\]
then there exists a time $t_0 \in \R$ and an isometry $F : (\Sigma,g_1) \to (\Sigma,g_2)$ such that $\phi = \varphi^{g_2}_{t_0} \circ F'$, where $F' : S\Sigma_1 \to S\Sigma_2$ is defined as $F'(x,v) := (F(x), \dd F_x(v))$.
\end{corollary}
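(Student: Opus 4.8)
The plan is to feed Theorem~\ref{theorem:main} a suitable pullback of $g_2$ and then to show that the remaining ambiguity in $\phi$ is precisely a time shift of the flow. First I would record that, because $\phi$ intertwines the two flows with the \emph{same} time parameter, it carries each closed orbit of $\varphi^{g_1}$ to a closed orbit of $\varphi^{g_2}$ of equal period. Since $\Sigma$ carries an Anosov metric it has genus $\geq 2$, so $\pi_1(\Sigma)$ is centreless and the fibre $\Z \subset \pi_1(S\Sigma_1)$ is exactly the centre of $\pi_1(S\Sigma_1)$ (and likewise for $S\Sigma_2$); hence the isomorphism $\phi_*$ preserves these central subgroups and descends to an isomorphism $\bar\phi_* : \pi_1(\Sigma) \to \pi_1(\Sigma)$. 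For an Anosov metric each nontrivial free homotopy class $c \in \mc{C}$ carries a unique closed geodesic, which lifts to a unique closed orbit; therefore, at the level of $\mc{C}$, the map $\phi$ realises the bijection induced by $\bar\phi_*$, and equality of periods reads
\[
\mc{L}_{g_1}(c) = \mc{L}_{g_2}(\bar\phi_*(c)), \qquad c \in \mc{C}.
\]
By the Dehn--Nielsen--Baer theorem I may realise $\bar\phi_*$ by a diffeomorphism $\Psi : \Sigma \to \Sigma$ (orientation preserving or reversing), after smoothing within its mapping class.

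Next, the metric $\Psi^* g_2$ is again Anosov and satisfies $\mc{L}_{\Psi^* g_2}(c) = \mc{L}_{g_2}(\Psi_* c) = \mc{L}_{g_1}(c)$ for all $c$, so Theorem~\ref{theorem:main} furnishes $\phi_0 \in \mathrm{Diffeo}^0(\Sigma)$ with $\phi_0^* g_1 = \Psi^* g_2$. Setting $F := \Psi \circ \phi_0^{-1}$ one checks $F^* g_2 = g_1$, so $F : (\Sigma, g_1) \to (\Sigma, g_2)$ is an isometry; moreover $F$ is isotopic to $\Psi$, hence $F$ and $\phi$ induce the same permutation of $\mc{C}$. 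Its differential lift $F'(x,v) = (F(x), \dd F_x(v))$ then conjugates $\varphi^{g_1}$ to $\varphi^{g_2}$, i.e. $F' \circ \varphi^{g_1}_t = \varphi^{g_2}_t \circ F'$.

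Finally I would set $h := (F')^{-1} \circ \phi : S\Sigma_1 \to S\Sigma_1$, which commutes with $\varphi^{g_1}$, preserves the orientation of the orbits, and induces the identity on $\mc{C}$; by uniqueness of the closed geodesic in each class it fixes every closed orbit setwise, acting on each as a rotation $\varphi^{g_1}_{\tau}$. The main obstacle is to upgrade this to $h = \varphi^{g_1}_{t_0}$ with a \emph{constant} $t_0$. This is the genuinely dynamical step: a homeomorphism commuting with $\varphi^{g_1}$ preserves the strong and weak (un)stable foliations, and combining this with expansivity of the transitive Anosov flow $\varphi^{g_1}$ and the density of its closed orbits, one shows that $h$ maps \emph{every} orbit into itself; writing $h(x) = \varphi^{g_1}_{\tau(x)}(x)$, the commutation relation forces $\tau$ to be continuous and flow invariant, hence constant by topological transitivity. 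Granting this, $h = \varphi^{g_1}_{t_0}$ yields $\phi = F' \circ \varphi^{g_1}_{t_0} = \varphi^{g_2}_{t_0} \circ F'$, which is the desired conclusion.
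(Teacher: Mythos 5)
Your proposal is correct and takes essentially the same route as the paper: the paper proves this corollary by invoking the argument of Croke--Fathi--Feldman (their Theorem C) on top of Theorem~\ref{theorem:main}, which is exactly what you reconstruct --- read off marked length spectrum equality up to the automorphism $\bar\phi_*$ induced by $\phi$, realise it by a diffeomorphism, apply Theorem~\ref{theorem:main} to get the isometry $F$, and reduce to showing that the self-conjugacy $h=(F')^{-1}\circ\phi$, which preserves every closed orbit, must equal $\varphi^{g_1}_{t_0}$. Your final dynamical step (preservation of closed orbits plus density of closed orbits, expansivity and transitivity force $h=\varphi^{g_1}_{t_0}$) is precisely the adaptation of Croke--Fathi--Feldman's Lemma~4.3 that the paper itself only sketches in one sentence, so both treatments leave that step at the same level of detail.
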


Taking $g_1 = g_2$ in Corollary \ref{corollary}, we thus find that the set of self-conjugacies of an Anosov geodesic flow over a surface is given (after taking the quotient by the flow) by the finite group of isometries of the metric. In the case of non-positively curved surfaces, Corollary \ref{corollary} was first obtained in \cite[Theorem B]{Croke-90}.  In \cite[Theorem C]{Croke-Fathi-Feldman-92} an argument is given that shows that in fact the claim in  Corollary \ref{corollary} follows from the marked length spectrum rigidity. The only point to note is that \cite[Lemma 4.3]{Croke-Fathi-Feldman-92} holds for an Anosov metric: indeed, a self-conjugacy isotopic to the identity must map a closed geodesic to itself (since there is a unique closed geodesic in each free homotopy class) and by density of closed orbits and transitivity it must be of the form $\varphi_{t_{0}}$.
In dimensions three and higher, finiteness of the group of self-conjugacies (after modding out by the flow itself) is known in some cases such as $1/4$-pinched negatively curved manifolds, we refer to \cite{Damjanovic-Wilkinson-Disheng-21} for further details.

\subsection{Strategy}
\label{ssection:strategy}

The proof strategy involves demonstrating that the marked length spectrum $\mc{L}_g$ captures the complex structure of the metric $[g]$ up to biholomorphisms isotopic to the identity, and hence determines the class of the underlying complex structure in Teichmüller space (Proposition \ref{proposition:key2}). Once this is established, the injectivity of the marked length spectrum in the same conformal class, as shown by Katok \cite{Katok-88}, leads to a straightforward proof of Theorem \ref{theorem:main}.

To establish that $\mc{L}_g$ determines the complex structure, we show that it encodes the period matrix of the underlying Riemann surface (Proposition \ref{proposition:key}) and then observe that the argument may be repeated on any finite cover
to recover the structure in Teichmüller space. This relies on the fact that given two different points
in Teichm\"uller space, there is a finite cover of the surface where the lifted complex structures are in different orbits of the mapping class group
(Lemma \ref{lemma:cover}).

\textcolor{black}{To recover the period matrix, we first show that holomorphic 1-forms on the Riemann surface associated to an Anosov metric correspond to the first Fourier modes of a certain space  $\mc{A}_+(S\Sigma)$ of fiberwise holomorphic flow-invariant distributions. These are distributions on $S\Sigma$ that are invariant under the geodesic flow and have a specific Fourier decomposition in the circle fibers of $S\Sigma$ (see \S\ref{sssection:degree}).
Their first Fourier mode corresponds to a genuine holomorphic 1-form on the Riemann surface. 
Conversely, an arbitrary holomorphic 1-form on the surface can be seen as the first Fourier mode of an element in $\mc{A}_+(S\Sigma)$.
This is a non-trivial fact and relies on advances in tensor tomography on surfaces \cite{Paternain-Salo-Uhlmann-14-2,Guillarmou-17-1}.}

\textcolor{black}{Finally, we show in Lemma \ref{lemma:magical} that the integral of any holomorphic $1$-form $f$ along a closed oriented geodesic 
$\gamma$ can be expressed, up to a multiplicative constant, as the integral of $u\, \dd\la$ on $S^1\gamma=\pi^{-1}(\gamma)$ (where $\pi:S\Sigma\to \Sigma$ is the projection and $\dd\la$ the Liouville $2$-form) for any $u\in \mc{A}_+(S\Sigma)$ having $f$ as its first non-zero Fourier mode. 
This allows for the recovery of integrals of holomorphic $1$-forms on closed geodesics, and thus the period matrix, from the conjugacy class of the flow (Proposition \ref{proposition:period}).
We note that the pairing formula of Lemma \ref{lemma:magical} shares some similarity with the intersection number for currents \emph{à la} Bonahon \cite{Bonahon-88}, Otal \cite{Otal-90}, and others.}  

\subsection{Organization of the paper}

The proof of Theorem \ref{theorem:main} relies on several tools, which we introduce in Section \ref{section:tools}. {\color{black}Specifically, \S\ref{ssection:complex} provides a brief review of the geometry of complex curves, while \S\ref{ssection:unit} is devoted to the geometry and harmonic analysis of the unit tangent bundle of a surface. In \S\ref{sssection:degree}, we introduce the notion of fiberwise holomorphic invariant distributions, and connect them to holomorphic $1$-forms on the surface.
 In \S\ref{ssection:hyperbolic}, we delve into hyperbolic dynamics and tensor tomography; we also compute the wavefront set of fiberwise holomorphic invariant distributions. The last paragraph \S\ref{ssection:pairing} is concerned with establishing a signficant pairing formula.} Finally, we present the proof of Theorem \ref{theorem:main} in Section \ref{section:proof}.\\
 
\noindent \textbf{Acknowledgements.} We warmly thank J. Marché, B. Petri, I. Smith and M. Wolff for very helpful discussions related to the proof of Lemma \ref{lemma:cover}. We also thank C. Matheus for pointing out an error in an earlier draft and the referees for several comments and suggestions for improvement.

\section{Background and preliminary lemmas}

\label{section:tools}

\subsection{Complex geometry}

\label{ssection:complex}

In what follows, $\Sigma$ is a smooth closed oriented surface of genus $\geq 2$.

\subsubsection{General facts} Let $g$ be a smooth Riemannian metric on $\Sigma$. The conformal class $[g]$ of $g$ (and the orientation of $\Sigma$) induces a complex structure $J \in C^\infty(\Sigma,\mathrm{End}(T\Sigma))$ on $\Sigma$, turning it into a Riemann surface which we shall denote by $(\Sigma,J)$. 

We denote by $\mc{T}(\Sigma)$ the Teichmüller space of $\Sigma$, that is, the space of complex structures $J$ on $\Sigma$ modulo the equivalence relation that $J \sim J'$ iff there exists a diffeomorphism $\psi : \Sigma \to \Sigma$, isotopic to the identity, such that $\psi^*J = J'$. Such an equivalence class of complex structures will be denoted by $[J]$. The mapping class group $\mathrm{MCG}(\Sigma)$ is defined as the quotient of orientation preserving diffeomorphisms $\mathrm{Diff}^+(\Sigma)$ modulo isotopy.

There is a well-defined action of $\mathrm{MCG}(\Sigma)$ on $\mc{T}(\Sigma)$ (by pullback). The quotient space $\mc{M}(\Sigma) := \mc{T}(\Sigma)/\mathrm{MCG}(\Sigma)$ is called the \emph{moduli space} of (complex structures on) $\Sigma$. We refer to \cite{Farb-Margalit-11} for a general introduction to the mapping class group of closed surfaces.

\subsubsection{Period matrix} Let $\left\{a_i,b_j\right\}$ be a canonical basis of the homology group $H_1(\Sigma,\Z)$ on the surface $\Sigma$. Let $(\Sigma,J)$ be a Riemann surface structure on $\Sigma$ and denote by $H_J^0(\Sigma,K^m)$ the space of holomorphic sections of the $m$-th power of the canonical bundle $K := T^*_{\C}\Sigma^{1,0}$ of $(\Sigma,J)$, for $m\geq 1$.
It is well-known that there exists a unique basis $\left\{\zeta_i\right\}$ of holomorphic 1-forms in $H_J^0(\Sigma,K)$ such that
\begin{equation}
\label{equation:delta}
\int_{a_j} \zeta_k = \delta_{jk},
\end{equation}
see \cite[Proposition, page 63]{Farkas-Kra-92} for instance. The \emph{period matrix} of $(\Sigma,J)$ is then defined as the matrix $\Pi(J)$ whose $jk$-entry is
\[
\Pi(J)_{jk} = \int_{b_j} \zeta_k.
\]
It is a symmetric matrix with positive definite imaginary part. The space of symmetric matrices with positive definite imaginary part and size given by the genus of $\Sigma$ is called the \emph{Siegel upper half-space} $\mc{H}(\Sigma)$. Hence, we get a well-defined period matrix map
\[
\Pi : \mc{T}(\Sigma) \longrightarrow \mc{H}(\Sigma).
\]
We will need the Torelli theorem in the following form: 
\begin{theorem}
\label{theorem:torelli}
Assume that $\Sigma$ has genus $\geq 2$. If $\Pi(J_1) = \Pi(J_2)$, then there exists a diffeomorphism $\psi : \Sigma \to \Sigma$ such that $\psi^*J_2 =  J_1$.
\end{theorem}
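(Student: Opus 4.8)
The plan is to reduce the statement to the classical Torelli theorem by packaging the period matrix into the principally polarized Jacobian variety of the Riemann surface and invoking the injectivity of the Torelli map. First I would recall the construction of the Jacobian. Fixing the canonical homology basis $\{a_i,b_j\}$ and the associated normalized basis $\{\zeta_k\}$ of $H^0_{J_i}(\Sigma,K)$ from \eqref{equation:delta}, the period lattice $\Lambda_i \subset \C^g$ is generated by the columns of the $g \times 2g$ matrix $(\mathrm{Id}_g \mid \Pi(J_i))$: the image of $a_j$ under the period map $\gamma \mapsto (\int_\gamma \zeta_1,\dots,\int_\gamma \zeta_g)$ is the standard vector $e_j$, and the image of $b_j$ is the $j$-th column of $\Pi(J_i)$. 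The Jacobian $\mathrm{Jac}(\Sigma,J_i) := \C^g/\Lambda_i$ is then a complex torus, and the intersection form on $H_1(\Sigma,\Z)$ --- which in the symplectic basis $\{a_i,b_j\}$ is the standard symplectic matrix, independent of $i$ --- equips it with a principal polarization. This data constitutes a principally polarized abelian variety.

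The key observation is that the hypothesis $\Pi(J_1)=\Pi(J_2)$ makes these two principally polarized abelian varieties literally identical: the lattices $\Lambda_1 = \Z^g + \Pi(J_1)\Z^g$ and $\Lambda_2 = \Z^g + \Pi(J_2)\Z^g$ coincide inside $\C^g$, and the polarizations agree since both are given by the standard symplectic form in the common basis $\{a_i,b_j\}$. In particular there is a canonical isomorphism of principally polarized abelian varieties $\mathrm{Jac}(\Sigma,J_1) \cong \mathrm{Jac}(\Sigma,J_2)$.

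I would then apply the classical Torelli theorem in its strong form: for smooth projective curves of genus $\geq 2$ (which compact Riemann surfaces are, by GAGA), an isomorphism of principally polarized Jacobians is induced --- up to the sign ambiguity coming from the hyperelliptic involution --- by an isomorphism of the curves themselves. This yields a biholomorphism $F : (\Sigma,J_1) \to (\Sigma,J_2)$. Since both Riemann surfaces share the same underlying smooth manifold $\Sigma$, the map $F$ is a self-diffeomorphism $\psi := F$ of $\Sigma$, and being a biholomorphism it satisfies $\dd\psi \circ J_1 = J_2 \circ \dd\psi$, which is exactly $\psi^* J_2 = J_1$.

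The main obstacle is the Torelli theorem itself, which is the deep classical input; everything else is the routine translation of the period matrix into polarized lattice data. I note that because we are given equality of the period matrices on the nose --- rather than merely up to the action of $\mathrm{Sp}(2g,\Z)$ --- the comparison of polarizations is immediate and no marking needs to be tracked, which is consistent with the fact that the conclusion only asserts the existence of some diffeomorphism $\psi$, not one isotopic to the identity.
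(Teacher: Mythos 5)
Your proposal is correct and follows essentially the same route as the paper, which simply invokes the classical Torelli theorem, citing \cite[Theorem III.12.3]{Farkas-Kra-92} and \cite[p.~359]{Griffiths-Harris-94} (the latter being precisely the principally polarized Jacobian formulation you spell out). Your reduction via the lattice $\Z^g + \Pi(J_i)\Z^g$ and the intersection-form polarization is the standard translation implicit in those references, and your final step converting the biholomorphism $F$ into $\psi^*J_2 = J_1$ is exactly right.
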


We refer to \cite[Theorem III.12.3]{Farkas-Kra-92} or \cite[p. 359]{Griffiths-Harris-94} for a proof. Actually, it can be proved that $[\psi] \in \mathrm{MCG}(\Sigma)$ lives in a degree $2$ extension of the \emph{Torelli group} that is, it acts as $\pm \mathbf{1}$ on homology $H_1(\Sigma,\Z)$ but this will not be needed in what follows.

\subsection{Fourier analysis on the unit tangent bundle}

\label{ssection:unit}

\textcolor{black}{We aim to recover the period matrix, that is, the integrals of holomorphic $1$-forms from the conjugacy class of the flow. There is a particularly useful relation between the Fourier decomposition of functions (or distributions) in the fibers of the unit tangent bundle and sections of powers of the canonical line bundle over $\Sigma$, as we now explain following \cite[Section 3]{Guillemin-Kazhdan-80}.
}
Let
\[
S\Sigma := \left\{ (x,v) \in T\Sigma ~|~ |v|_g=1\right\}
\]
be the unit tangent bundle of $(\Sigma,g)$ and $\pi:S\Sigma\to \Sigma$ the projection. 

Let $(\varphi_t)_{t \in \R}$ be the geodesic flow on $S\Sigma$ and $X$ its infinitesimal generator. Let $V$ be the vertical vector field generating the $\mathrm{SO}(2)$-rotation group $(R_{\theta})_{\theta \in [0,2\pi]}$ in the fibers and let $\V := \R V$. \textcolor{black}{Notice that for two conformally related metrics $g'=e^{\omega}g$, the associated vertical vector fields $V$ and $V'$ agree after identification of
$S_g\Sigma$ with $S_{g'}\Sigma$ by scaling.}
Define $H := -[X,V]$ and $\HH := \R H$. The vector fields $\left\{X, H, V\right\}$ form an orthonormal basis on $T(S\Sigma)$ for the \emph{Sasaki metric} (the natural lift of $g$ to $S\Sigma$). 

The \emph{Liouville} $1$-form $\lambda \in C^\infty(S\Sigma, T^*(S\Sigma))$ is defined by $\lambda(X) = 1$ and $\lambda(H)=\lambda(V)=0$. It is invariant by the geodesic flow, that is, $\mc{L}_X \lambda = 0$. Moreover, $\dd\lambda$ is a $2$-form that is non-degenerate on the \emph{contact plane} $\R H \oplus \R V$ and such that $\iota_X \dd\lambda = 0$. Hence
\[
\mu :=-\lambda \wedge \dd\lambda
\]
is a volume-form, invariant by the geodesic flow, called the \emph{Liouville} volume form. Equivalently, $\mu$ is the Riemannian volume form induced by the Sasaki metric on $S\Sigma$. From now on, the $L^2$ space on $S\Sigma$ is defined as $L^2(S\Sigma) := L^2(S\Sigma, \mu)$.

We define the $1$-forms $\beta, \psi$ on $S\Sigma$ by $\beta(H)=1=\psi(V)$ and $\beta(X)=\beta(V)=0=\psi(X)=\psi(H)$. It can then be checked that
\begin{equation}
\label{equation:dlambda}
\dd\lambda =  \psi \wedge \beta , \qquad \mu = \lambda \wedge \beta \wedge \psi.
\end{equation}
We set $(E^0)^* := \R\lambda$, $\HH^* := \R \beta$ and $\V^* := \R \psi$. We refer to \cite{Paternain-99} and \cite[Chapter 3]{Paternain-Salo-Uhlmann-book} for further details on the geometric structure on $S\Sigma$, see also Figure \ref{figure} below for a representation of the bundles introduced above.

We introduce the complex line bundle $\Omega_1 \to \Sigma$ whose fiber over $x \in \Sigma$ is given by:
\[
(\Omega_1)_x := \left\{ u(x, \cdot) ~|~ u \in C^\infty(S\Sigma), Vu = iu\right\}.
\]
The line bundle $\Omega_1$ is isomorphic to the canonical line bundle $K$ of the underlying Riemann surface $(\Sigma,J)$, that is, there exists a fiberwise linear map $\pi_1^* : K \to \Omega_1$ given for all $x \in \Sigma$ by
\[
\pi_1^* : K_x \ni f \mapsto \left(S_x\Sigma \ni v \mapsto f(v)\right) \in (\Omega_1)_x.
\]
The powers $\Omega_n := \Omega_1^{\otimes n}$ for $n \in \Z$, correspond to
\[
(\Omega_n)_x = \left\{ u(x, \cdot) ~|~ u \in C^\infty(S\Sigma), Vu = inu\right\}
\]
and
\begin{equation}
\label{equation:pin-star}
\pi_n^* : K_x^{\otimes n} \ni f \mapsto  \left(S_x\Sigma \ni v \mapsto f(v,...,v)\right) \in (\Omega_n)_x
\end{equation}
is an isomorphism. Hence, from now on, we will freely identify $K^{\otimes n}$ with $\Omega_n$ via $\pi_n^*$. We denote its adjoint by ${\pi_n}_*$ (with respect to {\color{black}the natural inner products on $L^2(\Sigma,K^{\otimes n})$ and $L^2(\Sigma,\Omega_n)$}). 

{\color{black}Notice that, by definition, $C^\infty(\Sigma,\Omega_n) \subset C^\infty(S\Sigma)$ is naturally a subset of $L^2$-functions on $S\Sigma$. Hence $\pi_n^* : C^\infty(\Sigma,K^{\otimes n}) \to C^\infty(S\Sigma)$ and its adjoint ${\pi_n}_*$ can also be seen as a map ${\pi_n}_* : C^\infty(S\Sigma) \to  C^\infty(\Sigma,K^{\otimes n})$. It is straightforward to check that ${\pi_n}_*(C^\infty(\Sigma,\Omega_{n'})) = 0$ for $n \neq n'$.}

The map $(2\pi)^{-1} {\pi_n}_* \pi_n^*$ is the identity on $C^\infty(S\Sigma,K^{\otimes n})$ and
\begin{equation}
\label{equation:proj}
\dfrac{1}{2\pi} {\pi_n}^* {\pi_n}_* : L^2(S\Sigma) \to L^2(S\Sigma)
\end{equation}
is the $L^2$-orthogonal projection onto the $n$-th Fourier mode. Any function $f \in L^2(S\Sigma)$ can thus be decomposed as $f = \sum_{n \in \Z} f_n$, where $f_n := (2\pi)^{-1} {\pi_n}^* {\pi_n}_* f$.

\subsection{Fiberwise holomorphic distributions.}

\label{sssection:degree}

\textcolor{black}{We now relate holomorphic $1$-forms on $\Sigma$ to the geodesic flow on $S\Sigma$. This connection is made by considering a class of distributions on $S\Sigma$, called fiberwise holomorphic invariant distributions, with the property that they are invariant by the geodesic flow and only have non negative Fourier modes.
} 

Let $\mc{D}'(S\Sigma)$ denote the space of distributions on $S\Sigma$ seen as the topological dual of volume forms, that is:
\[
\mc{D}'(S\Sigma) := (C^\infty(S\Sigma, \Lambda^3 T^* S\Sigma))'.
\]
Elements in $\mc{D}'(S\Sigma)$ are ``generalized'' functions which can be paired intrinsically against smooth volume forms. The \emph{wavefront set} $\WF(f)$ of a distribution $f \in \mc{D}'(S\Sigma)$ describes the (co)directions in $T^*(S\Sigma)$ in which the distribution is irregular, see \cite[Chapter 8]{Hormander-90} for a detailed account. 

The vector field $X$ acts on $\mc{D}'(S\Sigma)$ by duality, namely, given $u \in \mc{D}'(S\Sigma)$ and $\omega \in C^\infty(S\Sigma, \Lambda^3 T^* S\Sigma)$, $(Xu,\omega) := -(u,\mc{L}_X \omega)$. Just as for $L^2$-functions, any $f \in \mc{D}'(S\Sigma)$ can be decomposed as a sum of Fourier modes
\begin{equation}
\label{equation:decomp}
f = \sum_{n \in \Z} f_n,
\end{equation}
where $f_n \in \mc{D}'(S\Sigma)$ satisfies $V f_n = in f_n$ and $f_n := \tfrac{1}{2\pi}{\pi_n}^* {\pi_n}_* f$. A distribution (or a function) is said to have \emph{finite Fourier degree} if the sum \eqref{equation:decomp} only contains a finite number of terms.

\textcolor{black}{
\begin{definition}
 A distribution (or a function) is \emph{fiberwise holomorphic} if \eqref{equation:decomp} only contains non negative Fourier modes. 
\end{definition}
Defining the Szegö projections $\mathbf{S}_\pm : \mc{D}'(S\Sigma) \to \mc{D}'(S\Sigma)$ by
\begin{equation}
\label{equation:szego}
\mathbf{S}_+\left(\sum_{n \in \Z} f_n\right) :=\sum_{n \geq 0} f_n, \qquad \mathbf{S}_-\left(\sum_{n \in \Z} f_n\right) :=\sum_{n \leq 0} f_n.
\end{equation}
Notice that a distribution $f$ is fiberwise holomorphic iff $\mathbf{S}_+f =f$.}


The geodesic vector field $X$ splits into $X=\eta_++\eta_-$ where 
\begin{equation}
\label{equation:etapm}
\eta_\pm := \frac{1}{2}(X \mp iH).
\end{equation}
These operators are called the \emph{raising} ($+$) and \emph{lowering} ($-$) {\color{black}Guillemin-Kazhdan} operators: 
 $\eta_\pm$ act as raising/lowering operators on the Fourier decomposition \eqref{equation:decomp}, that is:
\[
\eta_\pm : C^\infty(\Sigma, \Omega_n) \to C^\infty(\Sigma,\Omega_{n\pm 1})
\]
is continuous.

{\color{black}For $n \geq 0$, the isomorphism $\pi_n^*$ defined in \eqref{equation:pin-star} intertwines the operator $\eta_-$ acting on $C^\infty(\Sigma,\Omega_n)$ and the $\bar{\partial}$ operator on $C^\infty(\Sigma,K^{\otimes n})$, see \cite[Lemma 2.1]{Paternain-Salo-Uhlmann-14-2} and its proof. In addition, the dimension of $\ker \eta_\pm|_{C^\infty(\Sigma,\Omega_n)}$ can then be expressed in terms of the genus of $\Sigma$ by the Riemann-Roch theorem.}

Define
\begin{equation}
\label{equation:h0}
H^0(\Sigma,\Omega_n) := \left\{ u \in C^\infty(S\Sigma)\,|\,  Vu = inu, \;\eta_- u = 0\right\}.
\end{equation}
We shall denote by $H^0_J(\Sigma,K^{\otimes n})$ the complex vector space of holomorphic differentials of degree $n$. The subscript $J$ indicates that this is computed with respect to the complex structure $J$. Observe that by the previous discussion, $\pi_n^*$ identifies $H^0_J(\Sigma,K^{\otimes n})$ with $H^0(\Sigma,\Omega_n)$ for $n \geq 0$.
  
 Note that the operator $X = \eta_- + \eta_+$ acts on the decomposition \eqref{equation:decomp} as
\[
X\left(\sum_{n \in \Z} f_n\right) = \sum_{n \in \Z} \eta_+ f_{n-1} + \eta_- f_{n+1}.
\] 
\textcolor{black}{This decomposition goes back to Guillemin-Kazhdan \cite{Guillemin-Kazhdan-80}.
}
Let
\begin{equation}\label{DefA_+}
\mc{A}_+(S\Sigma) := \left\{ f \in \mc{D}'(S\Sigma) ~|~ X f = 0, \; \mathbf{S}_+ f = f\right\},
\end{equation}
be the {\color{black}set} of flow-invariant fiberwise holomorphic distributions.

\begin{remark}It can be proved that $\mc{A}_+(S\Sigma)$ is an algebra, regardless of the underlying Riemannian geometry of the surface, that is, multiplication is well-defined and continuous with respect to the $\mc{D}'(S\Sigma)$ topology, see \cite[Theorem 1.1]{Bohr-Lefeuvre-Paternain-23}; this property was crucial in \cite{Paternain-Salo-Uhlmann-14-2,Guillarmou-17-1} for proving injectivity of the X-ray transform on symmetric tensors of order $\geq 2$ on Anosov surfaces (though the algebra property will not be needed in the proof of Theorem \ref{theorem:main}).
\end{remark}

\begin{lemma}\label{lem:A+_holof_0f_1} 
Given $f\in \mc{A}_+(S\Sigma)$, then $f_0$ is constant and $\eta_{-}f_{1}=0$, so $f_{1}$ is naturally identified with a holomorphic $1$-form.
\end{lemma}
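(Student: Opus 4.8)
We want to show that for $f \in \mc{A}_+(S\Sigma)$, the zeroth Fourier mode $f_0$ is constant and the first mode $f_1$ satisfies $\eta_- f_1 = 0$. The key structural facts at our disposal are: $f$ is invariant, so $Xf = 0$; $f$ is fiberwise holomorphic, so $f = \sum_{n \geq 0} f_n$ with $Vf_n = in f_n$; and $X$ acts on Fourier modes via the raising/lowering splitting $X = \eta_+ + \eta_-$ with $\eta_\pm : C^\infty(\Sigma,\Omega_n) \to C^\infty(\Sigma,\Omega_{n\pm 1})$ (and the same mapping property holds at the level of distributions by duality). The plan is to exploit the grading: the equation $Xf = 0$ decouples into infinitely many equations, one for each Fourier mode, and we read off information from the lowest modes.

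**The mode-by-mode equation.** Using the displayed formula $X\left(\sum_n f_n\right) = \sum_n \left(\eta_+ f_{n-1} + \eta_- f_{n+1}\right)$, the invariance $Xf = 0$ forces, for each $n \in \Z$, the vanishing of the $n$-th Fourier mode of $Xf$:
\begin{equation}
\label{eq:moderecursion}
\eta_+ f_{n-1} + \eta_- f_{n+1} = 0.
\end{equation}
Since $f$ is fiberwise holomorphic, $f_n = 0$ for all $n < 0$, so in particular $f_{-1} = 0$. Reading \eqref{eq:moderecursion} at $n = 0$ gives $\eta_+ f_{-1} + \eta_- f_1 = 0$, and the first term vanishes, leaving $\eta_- f_1 = 0$. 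This is exactly the second claim. Reading \eqref{eq:moderecursion} at $n = -1$ gives $\eta_+ f_{-2} + \eta_- f_0 = 0$; again $f_{-2} = 0$, so $\eta_- f_0 = 0$. Recalling $\eta_- = \tfrac{1}{2}(X + iH)$ and that on the zeroth mode (functions pulled back from $\Sigma$) the geodesic and horizontal derivatives are related, we should check that $\eta_- f_0 = 0$ together with $\eta_+ f_0 = 0$ forces $f_0$ constant. To get the companion equation $\eta_+ f_0 = 0$, note that $f_0$ lies in the zeroth mode, and the $n=1$ instance of \eqref{eq:moderecursion} reads $\eta_+ f_0 + \eta_- f_2 = 0$; this relates $f_0$ to $f_2$ rather than killing $\eta_+ f_0$ outright, so a slightly different argument is needed for constancy.

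**Deducing constancy of $f_0$.** The cleanest route to $f_0$ constant is to observe that $f_0$ is a flow-invariant distribution on the zeroth mode. Indeed, $f_0$ is a distribution on $\Sigma$ pulled back to $S\Sigma$, i.e. $Vf_0 = 0$, and we have just shown $\eta_- f_0 = 0$. Since $\eta_- f_0$ lies in the $(-1)$-mode and $\eta_+ f_0$ in the $(+1)$-mode, the full derivative decomposes as $Xf_0 = \eta_+ f_0 + \eta_- f_0 = \eta_+ f_0$, which sits in mode $1$. But for a function pulled back from the base, $Xf_0$ is odd under the flip $v \mapsto -v$ and one checks $\eta_+ f_0 = \overline{\eta_- \bar f_0}$ up to the standard conjugation symmetry relating modes $n$ and $-n$; since $f_0$ is real-reducible (it is the zeroth mode, invariant under $V$), the vanishing $\eta_- f_0 = 0$ pairs with its conjugate to give $\eta_+ f_0 = 0$ as well. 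Hence $Xf_0 = 0$ with $Vf_0 = 0$, and since $\{X, H, V\}$ span $T(S\Sigma)$ with $H = i(\eta_+ - \eta_-)$ acting trivially too, $f_0$ is annihilated by all three frame vector fields and is therefore constant. The identification of $f_1$ with a holomorphic $1$-form is then immediate: $Vf_1 = if_1$ places $f_1 \in C^\infty(\Sigma,\Omega_1)$ (after the regularity is addressed), and $\eta_- f_1 = 0$ is precisely the condition that, under the isomorphism $\pi_1^*$ intertwining $\eta_-$ with $\bar\partial$, it corresponds to an element of $H^0_J(\Sigma,K)$.

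**Anticipated main obstacle.** The principal subtlety is regularity: a priori $f$ is only a \emph{distribution}, so $f_0$ and $f_1$ are distributional sections, and the statement ``$f_0$ is constant'' and ``$f_1$ is a genuine holomorphic $1$-form'' require knowing these low modes are smooth. The algebraic recursion \eqref{eq:moderecursion} is purely formal and transfers verbatim to distributions, so the vanishing identities $\eta_- f_1 = 0$ and $\eta_- f_0 = 0$ hold in $\mc{D}'$. The genuinely nontrivial point is elliptic regularity for the operator $\eta_-$ (equivalently $\bar\partial$) on the low Fourier modes: since $\eta_-$ is elliptic on these modes, a distributional solution of $\eta_- f_1 = 0$ is automatically smooth, which upgrades $f_1$ to a classical holomorphic $1$-form and $f_0$ to a smooth invariant function, hence constant. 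I expect the main work to lie in justifying this elliptic upgrade cleanly in the distributional category, whereas the mode-counting that isolates the equations on $f_0$ and $f_1$ is elementary.
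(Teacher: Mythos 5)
Your extraction of the two identities is correct and coincides with the paper's argument: since $f_{-1}=f_{-2}=0$, the vanishing of the Fourier modes $(Xf)_0$ and $(Xf)_{-1}$ gives $\eta_-f_1=0$ and $\eta_-f_0=0$, and your closing remark on elliptic regularity (a distributional solution of $\eta_- u=0$ on a fixed Fourier mode is smooth, since $\eta_-$ is intertwined with $\bar\partial$ via $\pi_n^*$) correctly handles the fact that $f$ is only a distribution.

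However, your deduction that $f_0$ is constant contains a genuine error. You argue that $\eta_-f_0=0$ forces $\eta_+f_0=0$ via the conjugation identity $\eta_+f_0=\overline{\eta_-\bar f_0}$, justified by the claim that $f_0$ is ``real-reducible'' because it lies in the zeroth mode. The identity itself is correct, but the justification is a non sequitur: $Vf_0=0$ does \emph{not} make $f_0$ real-valued --- the zeroth Fourier mode of a complex distribution is a complex-valued function on $\Sigma$ (e.g.\ $f_0\equiv i$), and $\eta_-f_0=0$ gives no information about $\eta_-\bar f_0$ unless $\bar f_0=f_0$. Note also that your argument is purely algebraic and never uses compactness of $\Sigma$; if it were valid it would prove that every holomorphic function on \emph{any} Riemann surface is constant, which is false. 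The correct conclusion --- and this is exactly what the paper does --- is to observe that $\eta_-f_0=0$ with $Vf_0=0$ says precisely that $f_0$ is a holomorphic function on the \emph{closed} Riemann surface $(\Sigma,J)$, and holomorphic functions on a compact connected Riemann surface are constant (maximum principle). Compactness is the essential input, not a conjugation symmetry; once constancy is known, $\eta_+f_0=0$ follows trivially, but it cannot be used as an intermediate step the way you propose.
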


\begin{proof} Note that $Xf=0$, implies in particular that the Fourier modes $(Xf)_{-1}$ and $(Xf)_{0}=0$ must vanish. Using that $X=\eta_{+}+\eta_{-}$ and the fact that $f$ is fibrewise holomorphic, we see that that $\eta_{-}f_0=0$ and $\eta_{-}f_{1}=0$. Since holomorphic functions on $\Sigma$ are constant the result follows.
\end{proof}

\subsection{Hyperbolic dynamics and the wavefront set of elements in $\mathcal A_{+}(S\Sigma)$}

\label{ssection:hyperbolic}

We now further assume that $(\Sigma,g)$ is an \emph{Anosov} metric, that is, the geodesic flow on $S\Sigma$ is Anosov (or \emph{uniformly hyperbolic}).

\subsubsection{Definition. First properties}

Recall that the geodesic flow $(\varphi_t)_{t \in \R}$ is \emph{Anosov} if there exists a flow-invariant continuous splitting 
\[
T(S\Sigma) = \R X \oplus E^s \oplus E^u
\]
and uniform constants $C,\lambda > 0$ such that
\begin{equation}
\label{equation:anosov}
\begin{array}{ll}
 |d\varphi_t(w)| \leq C e^{-\lambda t}|w|, & \qquad  \forall t \geq 0, \forall w \in E^s, \\
  |d\varphi_{-t}(w)| \leq C e^{-\lambda t}|w|, & \qquad  \forall t \geq 0, \forall w \in E^u.
\end{array}
\end{equation}
By \cite{Klingenberg-74} (see also \cite{Mane-87} for an alternative proof), the bundles $E^s$ and $E^u$ are known to intersect the vertical bundle $\V$ trivially, that is
\begin{equation}
\label{equation:disjoint0}
E^s \cap \V = E^u \cap \V = \{0\}.
\end{equation}
\textcolor{black}{Moreover, there exist functions $r_\pm\in \bigcap_{\eps>0}C^{2-\eps}(S\Sigma)$ }solutions to the Riccati equations
\[
X r_\pm + r_\pm^2 + \pi^*\kappa = 0,
\]
where $\kappa$ denotes the Gauss curvature on $(\Sigma,g)$, such that
\[
E^s = \R(H + r_- V), \qquad E^u = \R(H+r_+ V).
\]
If $\kappa < 0$, then $r_- < 0$ while $r_+ > 0$. We set $Y^s := H + r_- V$ and  $Y^u := H+r_+ V$. Note that the basis $\left\{X,Y^s,Y^u\right\}$ is positively-oriented.

\begin{center}
\begin{figure}[htbp!]
\includegraphics[scale=0.4]{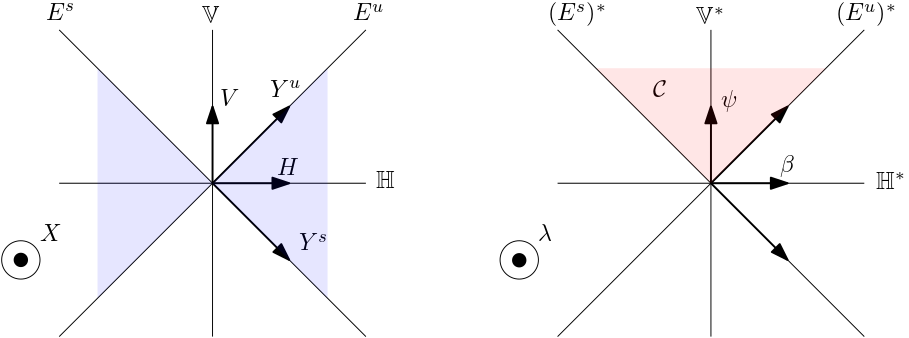}
\caption{The various subbundles in tangent and cotangent space.}
\label{figure}
\end{figure}
\end{center}

The dual bundles are defined by
\[
(E^0)^*(E^s \oplus E^u) = 0 = (E^s)^*(E^0\oplus E^s) = (E^u)^*(E^0 \oplus E^u),
\]
where $E^{0}:= \R X$. It can be checked that $(E^s)^*$ (resp. $(E^u)^*$) satisfies similar estimates \eqref{equation:anosov} to $E^s$ (resp. $E^u$) with $d\varphi_t$ being replaced by $d\varphi_t^{-\top}$ (inverse transpose). Equation \eqref{equation:disjoint0} becomes in the cotangent bundle
\begin{equation}
\label{equation:disjoint}
(E^s)^* \cap \HH^* = (E^u)^* \cap \HH^* = \{0\}.
\end{equation}

\subsubsection{Wavefront set properties}

We introduce 
\[
\mc{C} \subset \{(v,\xi)\in T^*(S\Sigma) \,|\, \xi(X(v))=0\},
\]
the closed cone enclosed by $(E^s)^*$ and $(E^u)^*$ in the half-space $\left\{ \xi(V(v)) \geq 0\right\}$, see Figure \ref{figure} for a description.

\begin{lemma}
\label{lemma:wf-c}
Let $f \in \mc{A}_+(S\Sigma)$. Then $\WF(f) \subset \mc{C}$ and ${\pi_n}_* f \in C^\infty(\Sigma,K^{\otimes n})$ for all $n \in \Z$.
\end{lemma}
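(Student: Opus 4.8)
The plan is to read off the wavefront information from the two defining properties of $f\in\mc{A}_+(S\Sigma)$, namely flow-invariance $Xf=0$ and fiberwise holomorphicity $\mathbf{S}_+f=f$, and then to combine them using the Anosov radial dynamics. Throughout I write a covector at $(v,\xi)\in T^*(S\Sigma)$ in the coframe dual to $\{X,H,V\}$ as $\xi=a\lambda+b\beta+c\psi$, so that $a=\xi(X)$, $b=\xi(H)$, $c=\xi(V)$; in particular $\mc{C}$ lives in the plane $\{a=0\}$, where covectors have the form $b\beta+c\psi$.

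First I would exploit invariance. Since $X$ is a real, non-vanishing vector field and $Xf=0$, the standard propagation of singularities for the transport equation (equivalently, the invariance of the wavefront set under the flow of a real vector field annihilating $f$) shows that $\WF(f)$ is a closed conic subset of the characteristic set $\{a=0\}$ and is invariant under the symplectic lift $\Phi_t$ of the geodesic flow $\varphi_t$ to $T^*(S\Sigma)$. Next I would use fiberwise holomorphicity to confine $\WF(f)$ to the half-space $\{c\ge 0\}$. The Szegő projector $\mathbf{S}_+$ of \eqref{equation:szego} equals $\mathbf{1}_{[0,\infty)}(-iV)$, a function of the first-order operator $-iV$ whose symbol is $\xi\mapsto c=\xi(V)$; hence (up to the smoothing zero-mode correction ${\pi_0}^*{\pi_0}_*$) it is a pseudodifferential operator of order $0$ which is microlocally the identity on $\{c>0\}$ and microlocally smoothing on $\{c<0\}$, exactly as for the classical Hardy/Szegő projector on the circle fibers. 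Since $f=\mathbf{S}_+f$, this yields $\WF(f)\subset\{c\ge 0\}$.

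I would then combine the two inclusions with the radial dynamics in $\{a=0\}$. In the $(b,c)$-plane the fixed directions of $\Phi_t$ are the four rays $\pm(E^s)^*$ and $\pm(E^u)^*$; by the cotangent estimates recalled after \eqref{equation:disjoint}, $(E^s)^*$ and $(E^u)^*$ are respectively the repelling and attracting rays for the forward flow (and symmetrically for their antipodes). By definition $\mc{C}$ is precisely the closed sector between $(E^s)^*$ and $(E^u)^*$ lying in $\{c\ge 0\}$ (the one containing $\V^*=\R\psi$). Any direction in $\{c\ge 0\}\setminus\mc{C}$ lies in an open arc one of whose endpoints is $-(E^s)^*$ or $-(E^u)^*$, both of which have $c<0$; hence its forward or its backward $\Phi_t$-orbit accumulates on a direction with $c<0$. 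Were such a direction contained in $\WF(f)$, closedness and $\Phi_t$-invariance would force $\WF(f)$ to meet $\{c<0\}$, contradicting the previous paragraph. Therefore $\WF(f)\subset\mc{C}$. I expect this dynamical confinement, together with making precise the microlocal mapping property of $\mathbf{S}_+$ near the transition set $\{c=0\}$, to be the main point requiring care.

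Finally, for the regularity of the Fourier modes I note that every nonzero covector in $\mc{C}$ satisfies $c=\xi(V)>0$, because both $(E^s)^*$ and $(E^u)^*$ have nonzero $\V^*$-component; thus $\WF(f)$ is disjoint from the set $\{\xi(V)=0\}$, which contains the conormal bundle of the fibers of $\pi:S\Sigma\to\Sigma$. Since ${\pi_n}_*$ is fiber integration against a smooth section (the adjoint of the smooth bundle map $\pi_n^*$ of \eqref{equation:pin-star}), the standard wavefront bound for pushforward along the submersion $\pi$ gives that $\WF({\pi_n}_* f)$ can only contain covectors $\eta$ with $d\pi_v^{\top}\eta\in\WF(f)$ for some $v\in S_x\Sigma$; but every such $d\pi_v^{\top}\eta$ annihilates $V$ and so cannot lie in $\WF(f)$. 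Hence $\WF({\pi_n}_* f)=\emptyset$, i.e. ${\pi_n}_* f\in C^\infty(\Sigma,K^{\otimes n})$ for every $n\in\Z$, which completes the proof.
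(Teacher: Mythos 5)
Your proposal is correct and follows essentially the same route as the paper: fiberwise holomorphicity of $f$ confines $\WF(f)$ to $\{\xi(V)\geq 0\}$ via the microlocal structure of $\mathbf{S}_+$, flow-invariance plus elliptic regularity and propagation of singularities confine it to the characteristic set $\{\xi(X)=0\}$ and make it invariant under $d\varphi_t^{-\top}$, the source/sink dynamics of $\pm(E^s)^*,\pm(E^u)^*$ then force $\WF(f)\subset\mc{C}$, and the pushforward wavefront bound (the paper cites Friedlander--Joshi) gives smoothness of ${\pi_n}_*f$ since $\mc{C}\cap((E^0)^*\oplus\HH^*)=\{0\}$. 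The only difference is expository: you spell out the radial-dynamics argument that the paper compresses into the assertion that $\mc{C}$ is the maximal invariant subset of $\{\xi(V)\geq 0,\ \xi(X)=0\}$, and you sketch the Szeg\H{o} projector's microlocal behaviour rather than citing \cite[Lemma 3.10]{Guillarmou-17-1}.
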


\begin{proof}
Observe that $f \in \mc{A}_+(S\Sigma)$ is fiberwise holomorphic, i.e. $\mathbf{S}_+f=f$ so
\begin{equation}
\label{equation:wf}
\WF(f) = \WF(\mathbf{S}_+f) \subset \left\{(v,\xi) \in T^*(S\Sigma) ~|~ \xi(V(v)) \geq 0 \right\},
\end{equation}
by using the wavefront set description of the Schwartz kernel of $\mathbf{S}_+$, see \cite[Lemma 3.10]{Guillarmou-17-1}. Moreover, $X f = 0$ so by elliptic regularity one has
\[
\WF(f)\subset \left\{(v,\xi) \in T^*(S\Sigma) ~|~ \xi(X(v)) =0 \right\},
\]
while by standard propagation of singularities (see \cite[Theorem 26.1.1]{Hormander-4}) $\WF(f)$ is invariant by the symplectic lift $d\varphi_t^{-\top}$ of $(\varphi_t)_{t \in \R}$. But the maximal flow-invariant subset of $T^*(S\Sigma)$ contained in $\left\{ \xi(V(v)) \geq 0, \xi(X(v))=0 \right\}$ is $\mc{C}$, so this proves the claim. \textcolor{black}{By \cite[Proposition 11.3.3]{FriedlanderJoshi}, for any distribution $f\in \mc{D}'(S\Sigma)$ we have ${\rm WF}({\pi_n}_*f)=\emptyset$ if ${\rm WF}(f)\cap  ((E^0)^* \oplus \HH^*)=\emptyset$. Since  $\mc{C} \cap ((E^0)^* \oplus \HH^*) = \left\{0\right\}$, we deduce that ${\pi_n}_* f$ has empty wavefront set, i.e. ${\pi_n}_* f \in C^\infty(\Sigma,K^{\otimes n})$.}
\end{proof}

\subsubsection{Tensor tomography}

The tensor tomography problem consists in studying the transport equation $X u = f$, where $u, f \in C^\infty(S\Sigma)$ are smooth, $f$ has a finite Fourier expansion of degree $n$ and showing that $u$ has finite Fourier expansion of degree $n-1$ (with $u_0$ constant for $n=0$).

\begin{theorem}[Tensor tomography]
\label{theorem:tensor}
Assume that $(\Sigma, g)$ is Anosov. \textcolor{black}{If $X u = f$ with $f, u \in C^\infty(S\Sigma)$ and $f$ is such that $f_{k}=0$ for $|k|\geq n+1$, then $u_k=0$ for all $|k|\geq n$ (with $u_0$ constant if $n=0$).}

\end{theorem}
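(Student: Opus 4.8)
My plan is to reduce the problem to a statement about the top nonzero Fourier modes and then deploy a Pestov-type energy identity (or equivalently, exploit the structure of the Guillemin-Kazhdan operators $\eta_\pm$ together with the fact that on an Anosov surface there are no nontrivial invariant distributions of a given parity). First I would observe that it suffices to treat the two parities of the Fourier expansion separately, since $X$ maps even modes to odd ones and vice versa; more precisely, if $Xu=f$, then grouping the Fourier identity $X(\sum_n u_n) = \sum_n (\eta_+ u_{n-1} + \eta_- u_{n+1})$ according to parity decouples the system. Without loss of generality (conjugating by the fiberwise rotation $v\mapsto -v$, which flips the sign of $V$ and hence exchanges positive and negative modes) I may assume we are trying to show that the highest-degree modes vanish.

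The key step is the following. Suppose $u$ has a nonzero Fourier mode $u_m$ with $m\geq n$ of maximal degree, say $m$ is the largest index with $u_m\neq 0$; since $\overline{u_m}$ corresponds to $u_{-m}$ we may take $m>0$. Because $f_k=0$ for $|k|\geq n+1$ and $m\geq n$, comparing the degree-$(m+1)$ Fourier mode of $Xu=f$ gives $\eta_+ u_m = 0$: the mode $u_{m+1}$ of $u$ vanishes by maximality, so $(Xu)_{m+1}=\eta_+ u_m + \eta_- u_{m+2} = \eta_+ u_m$, and this must equal $f_{m+1}=0$. Thus $u_m$ lies in $\ker\eta_+$. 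The heart of the matter is then to show that on an Anosov surface $\ker\eta_+|_{C^\infty(\Sigma,\Omega_m)}=0$ for $m\geq 1$ (and dually $\ker\eta_-|_{\Omega_{-m}}=0$). This is precisely the Guillemin-Kazhdan--Pestov-Uhlmann--Paternain-Salo-Uhlmann injectivity statement: elements of $\ker\eta_+$ correspond under $\pi_m^*$ to antiholomorphic differentials, and the absence of conjugate points together with the Pestov identity (or the vanishing of the relevant curvature term forced by the Anosov hypothesis) forces them to vanish. I would invoke the Pestov energy identity
\[
\|\eta_+ w\|^2 - \|\eta_- w\|^2 = -(\pi^*\kappa\, w, w) + \text{(lower-order flux terms)}
\]
applied to $w=u_m$, combined with the hyperbolicity that rules out the obstruction for $|m|\geq 1$; this is exactly where the Anosov assumption enters and where the result of \cite{Paternain-Salo-Uhlmann-14-2,Guillarmou-17-1} is used.

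Once the top mode $u_m$ is killed, I descend by downward induction on the degree: having shown $u_k=0$ for all $k>j$ (for $j\geq n$), the degree-$(j+1)$ mode of $Xu=f$ reads $\eta_+ u_j + \eta_- u_{j+2} = f_{j+1}=0$, and since $u_{j+2}=0$ by the inductive hypothesis this gives $\eta_+ u_j=0$, hence $u_j=0$ by the same kernel-vanishing; the symmetric argument handles the negative modes using $\eta_-$. This terminates once we reach degree $n$, leaving only modes $|k|\leq n-1$, with the $n=0$ case yielding $u_0$ constant because $\eta_- u_1 = \eta_+ u_{-1}=0$ forces $u_{\pm 1}=0$ and then $u_0$ solves $Xu_0$'s equation with $\eta_\pm u_0=0$, i.e. $u_0\in\ker\eta_+\cap\ker\eta_-$ which consists of constants.

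The main obstacle is the kernel-vanishing statement $\ker\eta_\pm=0$ for nonzero degree on Anosov surfaces: this is genuinely the deep input and cannot be obtained by the naive Pestov identity alone in the Anosov (possibly positively curved) regime, since the curvature term no longer has a favorable sign. The correct resolution uses the microlocal/Fredholm machinery and the structure of invariant distributions developed in \cite{Paternain-Salo-Uhlmann-14-2,Guillarmou-17-1}, which is precisely what I would cite rather than reprove, treating the rest of the argument as the elementary Fourier-mode bookkeeping above.
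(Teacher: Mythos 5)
Your reduction has a genuine gap at its very first step: you ``let $m$ be the largest index with $u_m\neq 0$''. Nothing guarantees that such an index exists. A smooth function on $S\Sigma$ has, in general, infinitely many nonzero Fourier modes (smoothness only gives rapid decay of $\|u_k\|_{L^2}$), and the assertion that a smooth solution of $Xu=f$, with $f$ of finite degree, must itself have finite Fourier degree is precisely the content of the theorem --- it is not an a priori property of $u$ that you may assume. So the downward induction never gets started, and the argument is circular: the ``elementary bookkeeping'' reduces the theorem to the existence of a top mode, which is (essentially) the theorem itself. For comparison, in negative curvature Guillemin--Kazhdan \cite{Guillemin-Kazhdan-80} avoid exactly this issue by a monotonicity argument rather than a top-mode argument: from $f_{m+1}=0$ one gets $\eta_+u_m=-\eta_-u_{m+2}$, hence $\|\eta_+u_m\|=\|\eta_-u_{m+2}\|\leq \|\eta_+u_{m+2}\|$ by the curvature inequality coming from the commutator $[\eta_+,\eta_-]$, and iterating this together with the decay $\|\eta_+u_{m+2k}\|\to 0$ forces $\eta_+u_m=0$ for \emph{every} $m\geq n$, with no maximal mode ever invoked. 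It is exactly this tail-control step that breaks down when the curvature changes sign, which is why the Anosov case is hard. (A further minor slip: the fiberwise rotation $v\mapsto -v$ multiplies the $k$-th mode by $(-1)^k$; it does not exchange positive and negative modes. What swaps $u_k$ and $u_{-k}$ is complex conjugation, after splitting $u$ into real and imaginary parts.)

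Relatedly, you have located the deep input in the wrong place. The kernel statement $\ker\eta_+|_{C^\infty(\Sigma,\Omega_m)}=0$ for $m\geq 1$ is not where the Anosov hypothesis enters and is not what \cite{Paternain-Salo-Uhlmann-14-2,Guillarmou-17-1} are needed for: it is elementary complex geometry, valid for \emph{any} metric on a closed oriented surface of genus $\geq 2$ (and Anosov surfaces have genus $\geq 2$). Indeed, if $\eta_+w=0$ with $w\in C^\infty(\Sigma,\Omega_m)$, $m\geq 1$, then $\bar w$ is a holomorphic section of a line bundle of degree $-m(2g-2)<0$, hence $w=0$; equivalently, since $\eta_+|_{\Omega_m}$ is (minus) the adjoint of $\eta_-|_{\Omega_{m+1}}$, one has $\ker\eta_+|_{\Omega_m}\cong H^1(\Sigma,K^{\otimes(m+1)})\cong H^0(\Sigma,K^{\otimes(-m)})^*=0$ by Serre duality. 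What the cited papers actually supply --- via existence of fiberwise holomorphic invariant distributions with prescribed Fourier modes in \cite{Paternain-Salo-Uhlmann-14-2}, or via microlocal/Fredholm analysis of the transport equation in \cite{Guillarmou-17-1} --- is precisely the control of the infinite tail of modes that your bookkeeping skips. Citing them only for the kernel vanishing therefore does not repair the argument; you would have to cite them for the full statement, which is in fact what the paper does: it gives no proof of Theorem \ref{theorem:tensor} at all, quoting \cite{Dairbekov-Sharafutdinov-03} for $n=0,1$ (the only cases used) and \cite{Paternain-Salo-Uhlmann-14-2,Guillarmou-17-1} in general.
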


\textcolor{black}{The proof of Theorem \ref{theorem:main} only requires the cases $n=0,1$.}

Theorem \ref{theorem:tensor} was first proved in negative curvature in \cite{Guillemin-Kazhdan-80} and generalized to the Anosov setting in \cite{Dairbekov-Sharafutdinov-03} for $n=0,1$, \cite{Paternain-Salo-Uhlmann-14-2} for $n\leq 2$, and \cite{Guillarmou-17-1} for all $n\geq 2$. Note that Theorem \ref{theorem:tensor} is equivalent to the fact that the X-ray transform operator $I_n^g : C^\infty(\Sigma, S^n T^*\Sigma) \to \ell^\infty(\mc{C})$ defined by
\[
I^g_nh(c) := \dfrac{1}{\mc{L}_g(c)} \int_0^{\mc{L}_g(c)} h_{\gamma(t)}(\dot{\gamma}(t),...,\dot{\gamma}(t))~ \dd t
\]
is injective when restricted to divergence-free symmetric $n$-tensors, see \cite[Chapter 16]{Lefeuvre-book} for instance. For $n=2$, this operator is the linearization (at $g$) of the marked length spectrum operator $g \mapsto \mc{L}_g$.

\subsection{The pairing}

\label{ssection:pairing}

Given a closed geodesic $\gamma$ we set $S^1\gamma =\pi^{-1}(\gamma)\subset S\Sigma$ to be the circle bundle over $\gamma$. If $[0,\ell_g(\gamma)] \ni \tau \mapsto \gamma(\tau)$ is an arc-length parametrization of $\gamma$, we can parametrize $S^1\gamma$ by coordinates $(\tau,\theta) \in \R/(\ell_g(\gamma)\Z) \times \R/(2\pi\Z)$ such that:
\[
S^1\gamma = \left\{ (\gamma(\tau), R_\theta \dot{\gamma}(\tau)) ~|~ \tau \in [0,\ell_g(\gamma)], \theta \in [0,2\pi]\right\},\]
where $R_\theta$ denotes rotation by angle $\theta$ in $S\Sigma$.
Note that the tangent space of $S^1\gamma$ at the point $(\tau,\theta)$ is spanned by $((\cos \theta) X + (\sin \theta) H, V)$. We then endow $S^1\gamma$ with the orientation given by $(\cos \theta~ \lambda + \sin \theta~ \beta) \wedge \psi$. From now on, $S^1\gamma$ will always denote the \emph{oriented} submanifold of $S\Sigma$ with orientation given by the $2$-form above.
The following holds:

\begin{lemma}\label{formule_liouvillegeod}
In the coordinate system $(\tau,\theta)$, $d\lambda =\textcolor{black}{\sin \theta~ \dd\tau\wedge \dd\theta}$.
\end{lemma}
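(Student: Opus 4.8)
The plan is to avoid evaluating the two‑form $\dd\lambda$ directly and instead to restrict the \emph{primitive} $\lambda$ to $S^1\gamma$ first and differentiate afterwards, exploiting that the exterior derivative commutes with pullback. The point of this reordering is that the restriction of $\lambda$ depends only on the normalization $\lambda(X)=1$, $\lambda(H)=\lambda(V)=0$ together with the frame of $T(S^1\gamma)$ recalled just above the statement, and is therefore insensitive to the orientation/sign conventions buried in the identity $\dd\lambda=\psi\wedge\beta$.

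Concretely, let $\iota : S^1\gamma \hookrightarrow S\Sigma$ be the inclusion. In the coordinates $(\tau,\theta)$ the coordinate vector fields are $\partial_\tau = (\cos\theta)X + (\sin\theta)H$ and $\partial_\theta = V$, with dual coframe $\dd\tau,\dd\theta$. First I would compute the restriction of $\lambda$: since $\lambda(X)=1$ and $\lambda(H)=\lambda(V)=0$, only the $X$–component of $\partial_\tau$ survives and
\[
\iota^*\lambda = \lambda(\partial_\tau)\,\dd\tau + \lambda(\partial_\theta)\,\dd\theta = \cos\theta\,\dd\tau .
\]
Then I would take the exterior derivative and use $\dd\circ\iota^*=\iota^*\circ\dd$, giving
\[
\iota^*\dd\lambda = \dd(\cos\theta\,\dd\tau) = -\sin\theta\,\dd\theta\wedge\dd\tau = \sin\theta\,\dd\tau\wedge\dd\theta,
\]
which is exactly the claimed formula.

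There is no analytic obstacle here; the computation is a few lines, and the only point that requires attention is the orientation bookkeeping, i.e. getting the sign right. One could instead evaluate $\dd\lambda=\psi\wedge\beta$ directly on the frame $(\partial_\tau,\partial_\theta)$ using $\psi(V)=1$, $\beta(H)=1$ and the vanishing of $\psi,\beta$ on the remaining basis vectors, but this route forces one to be careful with the sign conventions for $\{X,H,V\}$ and for the rotation $R_\theta$ entering the parametrization of $S^1\gamma$. The restrict‑then‑differentiate approach sidesteps this entirely, since $\iota^*\lambda=\cos\theta\,\dd\tau$ is determined by $\lambda(\partial_\tau)=\cos\theta$ alone and is unaffected by the sign of the $H$–component of $\partial_\tau$; that is why I would present it as the primary argument.
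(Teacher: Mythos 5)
Your proof is correct, but it is not the route the paper takes: the paper's proof simply cites the identity $\dd\lambda=\psi\wedge\beta$ from \eqref{equation:dlambda} (with a pointer to Otal) and, implicitly, evaluates that $2$-form on the frame of $T(S^1\gamma)$ --- exactly the alternative you mention and set aside. Your restrict-then-differentiate argument is a genuinely different mechanism: it never invokes the structure equation for $\dd\lambda$ on $S\Sigma$, only the normalization $\lambda(X)=1$, $\lambda(H)=\lambda(V)=0$, which gives $\iota^*\lambda=\cos\theta\,\dd\tau$, and then naturality $\iota^*\dd\lambda=\dd(\iota^*\lambda)=\sin\theta\,\dd\tau\wedge\dd\theta$. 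What this buys is precisely the robustness you advertise: the answer is manifestly independent of the sign conventions relating $R_\theta$, $V$ and $H=-[X,V]$, whereas the paper's route is sensitive to them --- indeed, a naive evaluation of $(\psi\wedge\beta)(\partial_\tau,\partial_\theta)$ with $\partial_\tau=\cos\theta\,X+\sin\theta\,H$ and $\partial_\theta=V$ produces $-\sin\theta$, and one must track that with consistent orientation conventions either the $H$-component of $\partial_\tau$ or the identification $\partial_\theta=\pm V$ flips sign, restoring $+\sin\theta$. Your computation sidesteps this bookkeeping entirely because $\lambda$ kills both $H$ and $V$; since the pairing formula of Lemma \ref{lemma:magical} and the period identity \eqref{equation:integration} depend on this sign, that is a real, not cosmetic, advantage. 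The only inputs you take from the paper are the description of the tangent frame of $S^1\gamma$ (equivalently, that $R_\theta\dot\gamma$ is parallel along the geodesic, so the $\tau$-coordinate curves are horizontal), and those are stated immediately before the lemma.
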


Note that for $\theta \in (0,\pi)$, this is a \textcolor{black}{\emph{positive}} multiple of the $2$-form defining the orientation on $S^1\gamma$.

\begin{proof}
This follows from \eqref{equation:dlambda}, see also \cite[p. 156]{Otal-90}.
\end{proof}

We next derive the following remarkable formula:

\begin{lemma}
\label{lemma:magical}
Let $f \in \mc{A}_{+}(S\Sigma)$ and $\gamma$ be a closed oriented geodesic. Then the pairing $(S^1\gamma, f \dd\lambda)$ is well-defined and
\begin{equation}
\label{equation:magical}
2i\int_{S^1\gamma} f~ \dd\lambda = \textcolor{black}{-}  \int_{[\gamma]} {\pi_{1}}_* f,
\end{equation}
where $[\gamma] \in H_1(\Sigma,\Z)$ denotes the homology class of the geodesic $\gamma$.
\end{lemma}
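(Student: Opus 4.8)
The plan is to make sense of the restriction of the distribution $f$ to $S^1\gamma$, and then to reduce the left-hand side to an integral of the single fiberwise Fourier mode $f_1$ along $\gamma$, which is exactly $\tfrac{1}{2\pi}\pi_1^*({\pi_1}_* f)$.

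First I would check that the pairing is well defined. The restriction $i^*f$ of $f$ along the inclusion $i : S^1\gamma \hookrightarrow S\Sigma$ exists as a distribution on $S^1\gamma$ as soon as $\WF(f)$ avoids the conormal bundle $N^*(S^1\gamma)$. By Lemma \ref{lemma:wf-c} we have $\WF(f) \subset \mc{C}$, so it suffices to show $\mc{C} \cap N^*(S^1\gamma) = \{0\}$. At the point with coordinates $(\tau,\theta)$ the tangent space of $S^1\gamma$ is spanned by $(\cos\theta)X+(\sin\theta)H$ and $V$, so every nonzero conormal covector $\xi$ satisfies $\xi(V)=0$. On the other hand each $\xi \in \mc{C}$ satisfies $\xi(X)=0$ and $\xi(V)\geq 0$; combined with $\xi(V)=0$ this puts $\xi$ in the intersection of $\mc{C}$ with $\{\xi(V)=0\}$. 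But the bounding rays $(E^s)^*$ and $(E^u)^*$ of $\mc{C}$ meet $\{\xi(V)=0\}$ only at the origin, by the Anosov transversality \eqref{equation:disjoint}, so $\xi=0$. Hence the restriction, and with it the pairing $(S^1\gamma, f\,\dd\lambda)$, is well defined.

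Next I would pass to Fourier modes in the fibers. By Lemma \ref{lemma:wf-c}, ${\pi_n}_* f$ is smooth for every $n$, so each mode $f_n = (2\pi)^{-1}\pi_n^*{\pi_n}_* f$ is in fact a smooth function on $S\Sigma$; moreover $f_n(x,R_\theta v) = e^{in\theta} f_n(x,v)$ since $V$ generates $(R_\theta)$ and $Vf_n = inf_n$. Restricting to $S^1\gamma$ gives $i^*f_n = e^{in\theta}\,c_n(\tau)$ with $c_n(\tau) := f_n(\gamma(\tau),\dot\gamma(\tau))$ smooth. As $f=\sum_{n\geq 0}f_n$ converges in the wavefront topology associated to a closed conic neighborhood of $\mc{C}$ that still avoids $N^*(S^1\gamma)$, the restriction is $i^*f = \sum_{n\geq 0} i^*f_n$ and may be integrated term by term against the smooth density $\dd\lambda|_{S^1\gamma}$. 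By Lemma \ref{formule_liouvillegeod}, $\dd\lambda|_{S^1\gamma} = \sin\theta\,\dd\tau\wedge\dd\theta$, and since
\[
\int_0^{2\pi} e^{in\theta}\sin\theta\,\dd\theta = i\pi\,\delta_{n,1}\quad\text{for all } n\geq 0,
\]
only the mode $n=1$ survives, giving $\int_{S^1\gamma} f\,\dd\lambda = i\pi\int_0^{\ell_g(\gamma)} c_1(\tau)\,\dd\tau$.

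Finally I would identify this with the period. Writing $\omega := {\pi_1}_* f \in C^\infty(\Sigma,K)$ (which equals ${\pi_1}_* f_1$, as ${\pi_n}_*$ kills the other modes), the relation $f_1 = (2\pi)^{-1}\pi_1^*\omega$ and the definition \eqref{equation:pin-star} of $\pi_1^*$ yield $c_1(\tau) = (2\pi)^{-1}\omega_{\gamma(\tau)}(\dot\gamma(\tau))$, so that $\int_0^{\ell_g(\gamma)} c_1(\tau)\,\dd\tau = (2\pi)^{-1}\int_\gamma \omega$; since holomorphic $1$-forms are closed, this period depends only on $[\gamma]\in H_1(\Sigma,\Z)$. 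Collecting constants, $2i\cdot i\pi\cdot(2\pi)^{-1} = -1$, which is precisely \eqref{equation:magical}. The main obstacle is the first step: giving the pairing meaning for a genuine distribution, which is exactly where the wavefront localization of Lemma \ref{lemma:wf-c} and the Anosov transversality \eqref{equation:disjoint} are essential (together with the $\mc{D}'_\Gamma$-continuity of restriction needed to interchange the mode sum and the integral). Once these are in place, the remainder is the elementary orthogonality of characters on the fiber circle.
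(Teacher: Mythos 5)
Your proof is correct and follows essentially the same route as the paper's: well-definedness of the pairing via the wavefront calculus (Lemma \ref{lemma:wf-c} together with \eqref{equation:disjoint}, forcing $\mc{C} \cap N^*(S^1\gamma) = \{0\}$), followed by the reduction to a Fourier computation on the torus $S^1\gamma$ using Lemma \ref{formule_liouvillegeod} and orthogonality against $\sin\theta$, with all constants and signs matching \eqref{equation:magical}. The only organizational difference is the reduction step — the paper proves \eqref{equation:smooth} for arbitrary smooth $f$ and extends by density/continuity in the H\"ormander topology, whereas you sum the (individually smooth, by Lemma \ref{lemma:wf-c}) fiberwise modes term by term — and both variants rest on the same standard sequential-continuity fact for restrictions of distributions with controlled wavefront set.
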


{\color{black}The distributional $2$-form $f \dd \lambda$ is defined as follows: given a $1$-form $\omega \in C^\infty(S\Sigma, T^*(S\Sigma))$, we can decompose $\omega = \omega_\lambda \lambda + \omega_\beta \beta + \omega_\psi \psi$, and then
\[
(f \dd \lambda, \omega) := (f, \omega_\lambda \dd \lambda \wedge \lambda).
\]}
Actually, if $f \in C^\infty(S\Sigma)$ (or if $f$ is merely a distribution but the pairing with $S^1\gamma$ makes sense), we will prove the formula
\begin{equation}
\label{equation:smooth}
\frac{i}{\pi} \int_{S^1\gamma} f~ \dd\lambda =\textcolor{black}{-}  \int_0^{\ell_g(\gamma)} f_1(\gamma(\tau),\dot{\gamma}(\tau)) \dd \tau \textcolor{black}{+}\int_0^{\ell_g(\gamma)} f_{-1}(\gamma(\tau),\dot{\gamma}(\tau)) \dd \tau,
\end{equation}
where $\tau \mapsto \gamma(\tau)$ is an arc-length parametrization of the geodesic. It then suffices to observe that \eqref{equation:magical} is a mere rewriting of \eqref{equation:smooth} since for $f \in \mc{A}_{+}(S\Sigma)$, ${\pi_{1}}_*f$ is a holomorphic differential (and the factor $2\pi$ comes from \eqref{equation:proj}).

\begin{remark}[Generalized intersection number]
\label{remark:number}
The pairing $(S^1\gamma, f\dd\lambda)$ can be thought of as a generalized \emph{intersection number}, following the terminology used by Bonahon \cite{Bonahon-88}, Otal \cite{Otal-90} and others. The difference is that we allow $f$ to be \emph{any} fiberwise holomorphic invariant distribution in $\mc{A}_{+}(S\Sigma)$ whereas, in some sense, the aforementioned authors only allowed $f$ to be a constant, which is a (very) particular case of a fiberwise holomorphic distribution. Moreover, we integrate over the full circle bundle $S^1\gamma$ whereas these authors integrate over $S^1\gamma_+$, that is, for angles $\theta \in (0,\pi)$. The usual intersection number of the \emph{Liouville current} $\eta$ (that is, the Liouville form modded out by the flow direction in the universal cover) with the geodesic $\gamma$ is commonly denoted by $\iota(\eta,\gamma)$ and is equal to $2\ell_g(\gamma)$ (or $\ell_g(\gamma)$, depending on the convention). It simply corresponds for us to $(S^1\gamma_+,d\lambda)$.\end{remark}

\begin{proof}[Proof of Lemma \ref{lemma:magical}]
By the wavefront set calculus (see \cite[Corollary 8.2.7]{Hormander-90} for instance), the pairing $(S^1\gamma,f \dd\lambda)$ is well-defined as long as
\begin{equation}
\label{equation:inter}
N^*(S^1\gamma) \cap \WF(f \dd\lambda) = \emptyset.
\end{equation}
Now, the conormal $N^*(S^1\gamma)$ to $S^1\gamma$ is a line contained in $(E^0)^* \oplus \HH^*$ and by Lemma \ref{lemma:wf-c} and \eqref{equation:disjoint}, the intersection \eqref{equation:inter} is indeed empty. Hence, the pairing $(S^1\gamma,f \dd\lambda)$ is well-defined and extends the pairing computed for $f \in C^\infty(S\Sigma)$. As a consequence, it suffices to establish \eqref{equation:smooth} for smooth functions and \eqref{equation:magical} then follows immediately.

Now, if $f \in C^\infty(S\Sigma)$, we can decompose $f$ in Fourier modes along $S^1\gamma$ and write
\begin{equation}
\label{equation:decomp2}
f(\tau,\theta) = \sum_{k \in \Z} a_k(\tau) e^{ik \theta}.
\end{equation}
\textcolor{black}{Then, using \eqref{equation:decomp2}, Lemma \ref{formule_liouvillegeod}} and $\sin \theta = (2i)^{-1}(e^{i \theta} - e^{-i \theta})$, we get:
\[
\begin{split}
\frac{i}{\pi}  \int_{S^1\gamma} f~ d\lambda &= \textcolor{black}{+}\frac{i}{\pi}  \int_0^{\ell_g(\gamma)} \int_0^{2\pi} f(\tau,\theta) \sin \theta~ \dd \theta \dd \tau  \\
& = \textcolor{black}{-}\int_0^{\ell_g(\gamma)} a_1(\tau) \dd \tau \textcolor{black}{+}\int_0^{\ell_g(\gamma)} a_{-1}(\tau) \dd \tau.
\end{split}
\]
The previous equality corresponds exactly to \eqref{equation:smooth}.
\end{proof}

\section{Proof of Theorem \ref{theorem:main}}

\label{section:proof}

In this section, we first prove that $\mc{L}_g$ determines the class $[J_g] \in \mc{M}(\Sigma)$ in the moduli space. Note that all the objects defined above (complex structures, stable and unstable bundles, etc.) now depend on the metric $g_1$ or $g_2$ and we shall add a subscript to distinguish them. The following is key to Theorem \ref{theorem:main}:

\begin{proposition}
\label{proposition:key}
Let $g_1,g_2$ be two smooth Anosov metrics on $\Sigma$. If $\mc{L}_{g_1} = \mc{L}_{g_2}$ then $[J_1]=  [J_2]$ in $\mc{M}(\Sigma)$. Equivalently, there exists a diffeomorphism $\psi : \Sigma \to \Sigma$ such that $\psi^*J_2 = J_1$ and $\psi^* g_2 = e^{2 f} g_1$ for some $f \in C^\infty(\Sigma)$.
\end{proposition}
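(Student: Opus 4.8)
The plan is to show that the marked length spectrum determines the period matrix $\Pi(J_g)$ of the underlying Riemann surface, and then to upgrade this to an equality in moduli space using finite covers and the Torelli theorem (Theorem \ref{theorem:torelli}). The starting point is that equality of marked length spectra $\mc{L}_{g_1} = \mc{L}_{g_2}$ produces an orbit-equivalence (in fact a conjugacy, after a time reparametrization of Livšic type) $\phi : S\Sigma_1 \to S\Sigma_2$ between the two geodesic flows that is homotopic to the identity; this is the standard consequence of the Anosov property together with the Livšic theorem, since the two length functions agree on every closed orbit. The conjugacy $\phi$ being homotopic to the identity means it induces the identity on $H_1(\Sigma,\Z)$ and, crucially, preserves free homotopy classes of closed geodesics, so it matches the closed geodesic $\gamma_{g_1}(c)$ with $\gamma_{g_2}(c)$ for every $c \in \mc{C}$.

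First I would exploit the pairing formula of Lemma \ref{lemma:magical} to transfer the computation of periods of holomorphic $1$-forms onto the geodesic flow. Given a holomorphic $1$-form on $(\Sigma, J_1)$, Lemma \ref{lem:A+_holof_0f_1} together with the tensor tomography results (Theorem \ref{theorem:tensor}) and the wavefront analysis of Lemma \ref{lemma:wf-c} guarantees that it arises as the first Fourier mode ${\pi_1}_* f$ of some flow-invariant fiberwise holomorphic distribution $f \in \mc{A}_+(S\Sigma_1)$. The key identity \eqref{equation:magical} then expresses the homological period $\int_{[\gamma]} {\pi_1}_* f$ as $-2i \int_{S^1\gamma} f\, \dd\lambda_1$, a quantity defined purely in terms of the flow $\varphi^{g_1}_t$, the Liouville structure, and the geometry of the circle bundle $S^1\gamma$ over each closed geodesic. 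Since $\phi$ matches closed geodesics and is a conjugacy homotopic to the identity, pushing $f$ forward by $\phi$ yields an element of $\mc{A}_+(S\Sigma_2)$ whose pairing against the corresponding circle bundles $S^1(\gamma_{g_2}(c))$ agrees with the original. This should show that the collection of all periods $\left(\int_{b_j} \zeta_k\right)_{jk}$ computed for $J_1$ and for $J_2$ against a \emph{common} basis $\{a_i, b_j\}$ of $H_1(\Sigma,\Z)$ coincide; after normalizing to the canonical basis satisfying \eqref{equation:delta}, this yields $\Pi(J_1) = \Pi(J_2)$, which is the content of Proposition \ref{proposition:key}'s intermediate step (Proposition \ref{proposition:period}).

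Next I would apply the Torelli theorem (Theorem \ref{theorem:torelli}) to conclude, from $\Pi(J_1) = \Pi(J_2)$, the existence of \emph{some} diffeomorphism $\psi$ with $\psi^* J_2 = J_1$, placing $[J_1]$ and $[J_2]$ in the same orbit of $\mathrm{MCG}(\Sigma)$. However, Proposition \ref{proposition:key} demands equality in the \emph{moduli space} $\mc{M}(\Sigma) = \mc{T}(\Sigma)/\mathrm{MCG}(\Sigma)$, and the Torelli conclusion only gives a diffeomorphism in a degree-$2$ extension of the Torelli group rather than one isotopic to the identity. To bridge this gap I would repeat the period computation on every finite cover $\widehat{\Sigma} \to \Sigma$: the conjugacy $\phi$ lifts to the covers (being homotopic to the identity), the two length spectra still agree, and so all lifted period matrices coincide. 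By Lemma \ref{lemma:cover}, if $[J_1]$ and $[J_2]$ were \emph{distinct} points of Teichmüller space lying in the same $\mathrm{MCG}$-orbit, there would exist a finite cover on which the lifted structures fall into different $\mathrm{MCG}(\widehat{\Sigma})$-orbits — contradicting the coincidence of lifted period matrices via Torelli on $\widehat{\Sigma}$. Hence $[J_1] = [J_2]$ in $\mc{M}(\Sigma)$, giving $\psi$ with $\psi^* J_2 = J_1$; since an isometry of conformal structures is precisely a conformal map, $\psi^* g_2$ and $g_1$ share a complex structure, i.e. $\psi^* g_2 = e^{2f} g_1$ for some $f \in C^\infty(\Sigma)$.

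\textbf{The main obstacle} I anticipate is the passage from equality of period matrices to equality in moduli space, i.e. the deployment of Lemma \ref{lemma:cover} together with the careful bookkeeping of how the conjugacy $\phi$, the bases of homology, and the distributions $f \in \mc{A}_+$ behave under lifting to finite covers. The Torelli theorem alone is insufficient because its ambiguity (the degree-$2$ extension of the Torelli group) is exactly the kind of discrepancy that a single period matrix cannot detect; the covering argument is what promotes a point-in-an-orbit statement to a genuine identification in $\mc{M}(\Sigma)$. A secondary technical point requiring care is ensuring that the conjugacy $\phi$, which is merely continuous (Hölder) rather than smooth, nonetheless suffices to match the \emph{integrals} $\int_{S^1\gamma} f\,\dd\lambda$ on the two sides — this is where the invariance of the pairing under orbit-equivalence and the fact that $f\,\dd\lambda$ pairs against the flow-invariant current carried by $S^1\gamma$ must be used, rather than any smoothness of $\phi$.
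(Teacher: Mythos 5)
Your overall route --- realize holomorphic $1$-forms as first Fourier modes of elements of $\mc{A}_+(S\Sigma_1)$, transfer them through the conjugacy, use the pairing formula of Lemma \ref{lemma:magical} to show periods are preserved, and conclude by Torelli --- is the same as the paper's (Propositions \ref{proposition:algebra-iso} and \ref{proposition:period}). But note first a misreading of the statement: equality of $[J_1]$ and $[J_2]$ in $\mc{M}(\Sigma)=\mc{T}(\Sigma)/\mathrm{MCG}(\Sigma)$ \emph{means} that they lie in the same $\mathrm{MCG}(\Sigma)$-orbit, which is exactly what Theorem \ref{theorem:torelli} delivers once the period matrices agree. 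Your entire third paragraph (finite covers, Lemma \ref{lemma:cover}) is not needed for this proposition; it is the content of the \emph{next} step, Proposition \ref{proposition:key2}, where the moduli-space conclusion is upgraded to Teichm\"uller space. Also, the existence of $f \in \mc{A}_+(S\Sigma_1)$ with prescribed first mode is not a consequence of Lemma \ref{lem:A+_holof_0f_1} (which goes in the opposite direction) together with Theorem \ref{theorem:tensor}; it is the non-trivial surjectivity result \cite[Theorem 1.5]{Paternain-Salo-Uhlmann-14-2} defining the extension operator $\e_1$.

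The genuine gap is your treatment of the regularity of $\phi$. You take $\phi$ to be merely H\"older and propose to salvage the transfer of the pairing by an unspecified ``invariance under orbit-equivalence.'' This cannot work as stated: the pullback of a distribution $f\in\mc{D}'(S\Sigma_1)$ by a H\"older homeomorphism is not defined, and all the wavefront-set control that makes the pairing $(S^1\gamma, f\,\dd\lambda)$ meaningful and transferable is lost. The paper's proof hinges on the fact, special to three-dimensional contact Anosov flows, that the Liv\v sic conjugacy is automatically $C^\infty$ \cite{Feldman-Ornstein-87,Delallave-Moriyon88}, so that $\phi^*$ acts on distributions with $\WF(\phi^*f)=d\phi^{\top}\WF(f)$. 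Even granting smoothness, you assert without proof that $\phi^* f \in \mc{A}_+(S\Sigma_2)$; this requires (i) the orientation argument of Lemma \ref{lemma:gabriel} ruling out $d\phi^{\top}(\mc{C}_1)=-\mc{C}_2$ --- otherwise $\phi^*$ would produce fiberwise \emph{anti}-holomorphic distributions and the period identification would come out conjugated --- and (ii) the tensor tomography Theorem \ref{theorem:tensor} applied to $\mathbf{S}_-(\phi^*f)$ to show it reduces to a constant. Finally, $\phi$ does not map circle fibers to circle fibers, so $\phi(S^1\gamma_{g_2}(c))$ is only \emph{homologous} to $S^1\gamma_{g_1}(c)$ in $H_2(S\Sigma_1,\Z)$; equating the two pairings requires knowing that $f\,\dd\lambda_1$ is a closed distributional $2$-form admitting a decomposition $\alpha+\dd u$ with $\alpha$ smooth closed and $\WF(u)\subset\WF(f)$ (\cite[Lemma 2.1]{Dyatlov-Zworski-17}), as in the proof of Proposition \ref{proposition:period}; matching closed geodesics under a homotopy-to-identity conjugacy alone does not give this.
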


This should be compared with the case of a surface with boundary, see \cite{Pestov-Uhlmann-05,Guillarmou-17-2}. The proof of Proposition \ref{proposition:key} is the content of \S\ref{ssection:algebra} and \S\ref{ssection:period}. We then prove in Proposition \ref{proposition:key2} that we can recover from $\mc{L}_g$ the class $[J_g] \in \mc{T}(\Sigma)$ in Teichmüller space, and then the conformal factor of the metric (the latter is a standard argument due to Katok \cite{Katok-88}).

\subsection{The isomorphism}

\label{ssection:algebra}

There is a natural identification of $S\Sigma_1$ with $S\Sigma_2$ by simply scaling the fibers via the map
\[
s : S\Sigma_1 \longrightarrow S\Sigma_2, \qquad v \mapsto v/|v|_{g_2}.
\]
The following holds:

\begin{proposition}
\label{proposition:algebra-iso}
Assume that $\mc{L}_{g_1}=\mc{L}_{g_2}$. Then there exists a smooth diffeomorphism $\phi : S\Sigma_2 \to S\Sigma_1$ such that:
\begin{enumerate}[label=\emph{(\roman*)}]
\item $\phi \circ \varphi_t^{g_2} = \varphi_t^{g_1} \circ \phi$, for all $t \in \R$,
\item $s \circ \phi : S\Sigma_2 \to S\Sigma_2$ is isotopic to the identity,
\item $\phi$ induces a linear isomorphism:
\[
\phi^* : \mathcal A_+(S\Sigma_1) \longrightarrow \mathcal A_+(S\Sigma_2),
\]
by pullback of distributions.
\end{enumerate}
\end{proposition}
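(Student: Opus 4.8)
The plan is to realise $\phi$ as a \emph{time-preserving} conjugacy between the two Anosov geodesic flows, homotopic to the identity, and then to read off (iii) from the way such a conjugacy interacts with the dynamically defined bundles and the fibre co-orientation; write $X_{g_i}$ for the generator of $\varphi^{g_i}$. I would first construct an orbit equivalence. Both flows are transitive Anosov flows on $S\Sigma$ (identified via $s$), and since an Anosov metric has no conjugate points, the universal cover of $\Sigma$ is a disc whose boundary at infinity is, for each metric, $\pi_1(\Sigma)$-equivariantly identified with the Gromov boundary $\partial_\infty\pi_1(\Sigma)\cong S^1$. Composing the two identifications yields a $\pi_1(\Sigma)$-equivariant homeomorphism of boundary circles; passing to pairs of endpoints of geodesics, it descends to an orbit equivalence $\phi_0:S\Sigma_2\to S\Sigma_1$ taking oriented orbits of $\varphi^{g_2}$ to oriented orbits of $\varphi^{g_1}$ and homotopic to $s^{-1}$ (so that $s\circ\phi_0\simeq\id$). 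A priori $\phi_0$ is only Hölder continuous.

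Next I would make the orbit equivalence time-preserving. Such a $\phi_0$ carries a positive Hölder reparametrisation, $d\phi_0(X_{g_2})=\alpha\,X_{g_1}$ with $\alpha>0$, and the $g_1$-period of the image of a closed $g_2$-orbit $\gamma$ equals its $g_2$-period plus $\int_\gamma(\alpha-1)$. As $\phi_0\simeq s^{-1}\simeq\id$ preserves free homotopy classes, it sends the unique closed $g_2$-geodesic in a class $c$ to the unique closed $g_1$-geodesic in $c$; the hypothesis $\mc{L}_{g_1}=\mc{L}_{g_2}$ then forces $\int_\gamma(\alpha-1)=0$ over every closed orbit. By the Liv\v sic theorem for the transitive Anosov flow $\varphi^{g_2}$, $\alpha-1=X_{g_2}h$ is a coboundary, and absorbing $h$ into a flow-time reparametrisation of $\phi_0$ produces a conjugacy $\phi$ with $\phi\circ\varphi^{g_2}_t=\varphi^{g_1}_t\circ\phi$, i.e.\ (i); the correction is a continuous time-shift, so $s\circ\phi\simeq\id$, i.e.\ (ii).

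The \emph{main obstacle} is the regularity of $\phi$: the map produced above is a priori only Hölder, whereas (iii) involves pulling back distributions and their wavefront sets and so demands a genuine $C^\infty$ diffeomorphism. To upgrade it I would bootstrap along the invariant foliations: $\phi$ is automatically smooth in the flow direction, it carries the weak stable/unstable foliations of $\varphi^{g_2}$ onto those of $\varphi^{g_1}$ (these being dynamically defined and, by the $C^{2-\eps}$ Riccati regularity of $r_\pm$ recorded in \S\ref{ssection:hyperbolic}, themselves $C^{2-\eps}$), and one promotes it to transverse smoothness by the regularity theory of conjugacies of Anosov systems — a de la Llave--Marco--Moriy\'on type argument along each foliation, glued by Journ\'e's lemma. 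This is the technically heaviest point and where I expect the real work to lie.

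Finally, granting that $\phi$ is a smooth diffeomorphism with $\phi_*X_{g_2}=X_{g_1}$, I would prove (iii) as follows. The pullback $\phi^*$ is a topological automorphism of $\mc{D}'$ with inverse $(\phi^{-1})^*$, and it maps $\ker(X_{g_1})$ onto $\ker(X_{g_2})$. Differentiating the conjugacy relation shows $d\phi$ intertwines the linearised flows, hence maps $E^s_2,E^u_2$ onto $E^s_1,E^u_1$, and dually $d\phi^\top$ maps $(E^s_1)^*,(E^u_1)^*$ onto $(E^s_2)^*,(E^u_2)^*$; since $s\circ\phi\simeq\id$ preserves the ambient and fibre orientations, $\phi$ carries the cone $\mc{C}_1$ onto $\mc{C}_2$ (and not onto $-\mc{C}_2$). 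Thus for $f\in\mc{A}_+(S\Sigma_1)$, Lemma \ref{lemma:wf-c} gives $\WF(f)\subset\mc{C}_1$ and hence $\WF(\phi^*f)\subset\mc{C}_2\subset\{\xi(V_2)\geq 0\}$, so $\phi^*f$ is flow-invariant with non-negative wavefront. It remains to check fibrewise holomorphicity, i.e.\ that the strictly negative part $v:=\sum_{n\leq -1}(\phi^*f)_n$ vanishes. By the wavefront bound for the negative Szeg\"o projector one has $\WF(v)\subset\{\xi(V_2)\leq 0\}$, while $\WF(v)\subset\WF(\phi^*f)\subset\{\xi(V_2)\geq 0\}$; as $\mc{C}_2\cap\{\xi(V_2)=0\}=\{0\}$ this forces $\WF(v)=\emptyset$, so $v\in C^\infty$. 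A direct computation using $X_{g_2}(\phi^*f)=0$ and $X_{g_2}=\eta_++\eta_-$ shows $X_{g_2}v$ has only the Fourier modes $0$ and $-1$, so tensor tomography (Theorem \ref{theorem:tensor} with $n=1$) gives $v_k=0$ for all $|k|\geq 1$, whence $v=0$. Therefore $\phi^*f\in\mc{A}_+(S\Sigma_2)$, and the same argument applied to $(\phi^{-1})^*$ shows $\phi^*$ is the claimed isomorphism.
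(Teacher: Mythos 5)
Your overall route coincides with the paper's: construct a H\"older orbit equivalence isotopic to the identity, upgrade it to a time-preserving conjugacy using $\mc{L}_{g_1}=\mc{L}_{g_2}$ and the Liv\v sic theorem, invoke the regularity theory of conjugacies of three-dimensional Anosov flows for smoothness (the paper cites Feldman--Ornstein for $C^1$ and de la Llave--Moriy\'on for $C^\infty$, i.e.\ the same circle of ideas you name), and then prove (iii) by combining the wavefront bound of Lemma \ref{lemma:wf-c} with the Szeg\H{o} projector and tensor tomography (Theorem \ref{theorem:tensor}); that last computation you reproduce correctly.

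There is, however, one genuine gap, and it sits precisely at the step the paper isolates as Lemma \ref{lemma:gabriel}: your sentence ``since $s\circ\phi\simeq\mathrm{id}$ preserves the ambient and fibre orientations, $\phi$ carries the cone $\mc{C}_1$ onto $\mc{C}_2$ (and not onto $-\mc{C}_2$)''. The conjugacy $\phi$ does \emph{not} map fibres of $S\Sigma_2\to\Sigma$ to fibres of $S\Sigma_1\to\Sigma$ --- the vertical bundle is not dynamically defined --- so ``preserving the fibre orientation'' has no direct meaning for $\phi$, and nothing you have established excludes $d\phi^{\top}(\mc{C}_1)=-\mc{C}_2$. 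What your argument does give (intertwining of $(E^{s})^*,(E^{u})^*$, connectedness, ambient orientation preservation) is only $d\phi^{\top}(\mc{C}_1)=\pm\mc{C}_2$. Pinning down the sign amounts to showing $\phi$ preserves the orientation of the weak unstable bundle $\{X, H+r_+V\}$, and the paper needs a genuine argument for this: pass to the universal cover, where weak unstable leaves are graphs so the lifted foliation is conjugate to a product foliation and one can choose a \emph{closed} $1$-form $\omega_1\in C^1(S\widetilde{\Sigma}_1,(E^u_1)^*)$ with $\omega_1(V_1)>0$; writing $\widetilde{\phi}^*\omega_1=f\,\omega_2$, positivity of $f$ follows from homotopy invariance of the integral of the closed form $\widetilde{\phi}^*\omega_1$ over vertical circles --- this is where the hypothesis $s\circ\phi\simeq\mathrm{id}$ actually enters, and it enters homologically, not through any pointwise statement about fibres. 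The sign cannot be waved through: if $d\phi^{\top}(\mc{C}_1)=-\mc{C}_2$ held, your own wavefront argument would give $\WF(\phi^*f)\subset-\mc{C}_2\subset\{\xi(V_2)\leq 0\}$, and the tomography step would then show that $\phi^*$ maps $\mc{A}_+(S\Sigma_1)$ into the fiberwise \emph{anti}-holomorphic invariant distributions, so (iii) --- and the downstream recovery of the period matrix --- would fail.
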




Proposition \ref{proposition:algebra-iso} will rely on the following lemma:

{\color{black}\begin{lemma}
\label{lemma:gabriel}
Let $\phi:S\Sigma_{2}\to S\Sigma_{1}$ be a diffeomorphism satisfying \emph{(i)} and \emph{(ii)} in Proposition \ref{proposition:algebra-iso}.
Then $\phi$ preserves the natural orientation of the weak unstable bundle, namely that given by the basis
$\{X,Y^{u}=H+r_{+}V\}$.
\end{lemma}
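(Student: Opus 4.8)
The plan is to reduce the lemma to the positivity of a single scalar function and then determine its sign using the boundary at infinity, since this is the only step where hypothesis (ii) genuinely intervenes. First I would extract the pointwise consequences of (i): differentiating the conjugacy relation at $t=0$ gives $d\phi(X_2)=X_1\circ\phi$, so $\phi$ preserves the flow direction with factor exactly $+1$, and since a smooth conjugacy preserves the dynamically defined Anosov splitting, $d\phi(E^s_2)=E^s_1\circ\phi$ and $d\phi(E^u_2)=E^u_1\circ\phi$. Writing the global sections $Y^u=H+r_+V$ and $Y^s=H+r_-V$, this means $d\phi(Y^u_2)=c\,(Y^u_1\circ\phi)$ and $d\phi(Y^s_2)=a\,(Y^s_1\circ\phi)$ for some $a,c\colon S\Sigma_2\to\R\setminus\{0\}$; as $d\phi$ is smooth and $Y^{s},Y^{u}\in C^{2-\eps}$, the functions $a,c$ are continuous, hence of constant sign on the connected manifold $S\Sigma_2$. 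The lemma is exactly the assertion $c>0$.

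Next I would observe which sign information comes for free, and why it is not enough. Hypothesis (ii) makes $\phi$ orientation preserving: the scaling $s$ preserves the orientation of the unit tangent bundle (it covers the identity and is a positive radial reparametrization in each fibre), and $s\circ\phi$ is isotopic to the identity, so $\phi=s^{-1}\circ(s\circ\phi)$ preserves orientation. Since $\{X,Y^s,Y^u\}$ is a positively oriented frame, comparing orientations yields only $ac>0$. The same partial conclusion follows from contact geometry: because $\ker\lambda_i=E^s_i\oplus E^u_i$ is dynamically defined and $\lambda_i(X_i)=1$, the forms $\phi^*\lambda_1$ and $\lambda_2$ have the same kernel and agree on $X_2$, whence $\phi^*\lambda_1=\lambda_2$ and $\phi^* d\lambda_1=d\lambda_2$; evaluating on $(Y^u_2,Y^s_2)$ gives once more $ac>0$. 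Thus the crux is to break the symmetry between $E^s$ and $E^u$, which no orientation or symplectic argument can achieve. This is the main obstacle, and it is resolved by a genuinely dynamical input.

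To break the symmetry I would pass to the universal cover. An Anosov surface compactifies by an oriented circle $\partial_\infty$ (oriented by the orientation of $\Sigma$), and each $(x,v)\in S\til\Sigma$ has well-defined forward and backward endpoints $\xi_\pm$. The forward-endpoint map $\Psi_+\colon S\til\Sigma\to\partial_\infty$ is constant along the flow and along strong stable manifolds (both preserve the forward endpoint), so $d\Psi_+$ annihilates $\R X\oplus E^s$, while $d\Psi_+|_{E^u}$ is an isomorphism onto $T\partial_\infty$ (the $E^u$-derivatives of $\Psi_\pm$ are the unstable horocycle/Jacobi fields, well defined and nonvanishing despite the mere Hölder regularity of the splitting). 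The orientation of $E^u$ given by $Y^u$ corresponds under $d\Psi_+$ to a fixed orientation of $\partial_\infty$, determined solely by the convention that $\{X,Y^s,Y^u\}$ be positively oriented; consequently $d\Psi_+^{(i)}(Y^u_i)$ is positively oriented on $\partial_\infty$ for $i=1,2$, by one and the same convention. On the other hand, (i) lets $\phi$ lift to a conjugacy $\til\phi$ of the lifted flows, inducing a boundary map $\partial\phi$ with $\Psi_+^{(1)}\circ\til\phi=\partial\phi\circ\Psi_+^{(2)}$, and (ii) forces $\partial\phi$ to be the natural (orientation preserving) identification of the two boundary circles, since $s\circ\phi$ is isotopic to the identity and thus fixes the boundary at infinity.

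Finally I would conclude by differentiating the intertwining relation along $Y^u_2$:
\[
c\,\big(d\Psi_+^{(1)}(Y^u_1)\big)=d(\partial\phi)\big(d\Psi_+^{(2)}(Y^u_2)\big).
\]
The right-hand side is the image, under the orientation preserving map $d(\partial\phi)$, of a positively oriented tangent vector to $\partial_\infty$, hence is positively oriented; and $d\Psi_+^{(1)}(Y^u_1)$ is positively oriented as well. Therefore $c>0$, which is the claim. The delicate points to be filled in are precisely the orientation bookkeeping on $\partial_\infty$: that the convention linking $Y^u$ to the boundary orientation is identical for $g_1$ and $g_2$, and that isotopy to the identity indeed produces an orientation preserving boundary map. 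Once these are in place, the boundary-orientation input is exactly what selects $c>0$ over the a priori admissible $c<0$ left open by the symmetric arguments above.
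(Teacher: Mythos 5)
Your reduction of the lemma to the sign of the single function $c$ (with $d\phi(Y^u_2)=c\,Y^u_1\circ\phi$), and your diagnosis that orientation/contact arguments only yield $ac>0$, are both correct and match the paper's framing; but your symmetry-breaking step is a genuinely different route. The paper also passes to the universal cover, but instead of endpoint maps it uses that the weak unstable foliation is conjugate to a product foliation on $\widetilde{\Sigma}\times S^1$, picks \emph{closed} $C^1$ one-forms $\omega_i$ annihilating the weak unstable bundle with $\omega_i(V_i)>0$, writes $\widetilde{\phi}^*\omega_1=f\,\omega_2$, and uses hypothesis (ii) homologically: the $\widetilde\phi$-preimage of a vertical fibre is homotopic to a vertical fibre, so closedness of the forms converts positivity of $\int_{S^1_1}\omega_1$ into positivity of $\int_{S^1_2}f\,\omega_2$, forcing $f>0$. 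Morally $\omega_i$ plays the role of ``$d\Psi_-^{(i)}$'', so the two proofs encode the same geometric mechanism; the paper's choice has the advantage that one only ever differentiates and integrates smooth closed forms and homotopy classes of fibres, whereas your version buys a more transparent geometric picture (the conjugacy fixes the boundary circle, hence cannot flip unstable leaves) at the cost of regularity issues.

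Those regularity issues are the one genuine gap in your write-up. For a general Anosov metric the splitting, the endpoint maps $\Psi_\pm$, and the identification of the two boundary circles are only H\"older, and $\partial_\infty$ carries no canonical smooth structure; hence ``$d\Psi_+$'', the claim that the $E^u$-derivatives of $\Psi_\pm$ are ``well defined and nonvanishing'' Jacobi fields, and your displayed chain-rule identity are not defined as written (they are fine in negative curvature, or after invoking the nontrivial $C^{1}$ regularity of weak foliations of three-dimensional contact Anosov flows, but not by assertion). The fix is to purge derivatives in favour of monotonicity: $\Psi_+^{(i)}$ restricted to a strong unstable leaf is a continuous injection into $\partial_\infty\setminus\{\xi_-\}$ (uniqueness of the geodesic joining two distinct boundary points), hence monotone; it is \emph{increasing} for the canonical, metric-independent orientation of $\partial_\infty$ because the fibre maps $v\mapsto\Psi_+^{(i)}(x,v)$ are orientation-preserving homeomorphisms and $V=(Y^u-Y^s)/(r_+-r_-)$ has positive $Y^u$-component modulo the weak stable bundle, which is the fibre of $\Psi_+^{(i)}$ (this, rather than positivity of the frame $\{X,Y^s,Y^u\}$, is what makes the convention identical for $g_1$ and $g_2$); and $\partial\phi=\mathrm{id}$ because the lift of $s\circ\phi$ is equivariantly at bounded distance from the identity. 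Then $\Psi_+^{(1)}\circ\widetilde{\phi}=\Psi_+^{(2)}$ on each strong unstable leaf exhibits $\widetilde{\phi}$ as a composition of increasing maps, i.e.\ $c>0$. You should also record that the facts you use about $\partial_\infty$ (convergence of rays, fibre maps being homeomorphisms) do hold for Anosov metrics without curvature assumptions, with references, since they are not quoted in the paper.
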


\begin{proof} For this proof it is convenient to pass to the universal cover $\widetilde{\Sigma}$ of $\Sigma$. We lift the weak unstable foliations
to $S\widetilde{\Sigma}_{2}$ and $S\widetilde{\Sigma}_{1}$ and the diffeomorphism $\phi$ to a diffeomorphism $\tilde{\phi}$.
Observe that $\phi$ preserves the orientation of the weak unstable bundles if and only if $\tilde{\phi}$ does.

Now we use that the weak unstable leaves are graphs and hence the weak unstable foliations are conjugate to the product foliation in $\widetilde{\Sigma}\times S^{1}$.
Hence we may pick {\it closed} 1-forms $\omega_{i}$ such that $\omega_{i}(V_{i})>0$ and $\omega_{i}\in C^{1}(S\widetilde{\Sigma}_{i}, (E^u_{i})^*)$ for $i=1,2$.
Since $\tilde{\phi}$ conjugates the weak foliations there is a never vanishing function $f \in C^1(S\widetilde{\Sigma}_{2})$ such that
\[\tilde{\phi}^*\omega_{1}=f\,\omega_{2}.\]
Our claim is reduced to showing that $f>0$. Fix a point $x_0\in \widetilde{\Sigma}$ and consider the vertical fibres $S^{1}_{i} \subset S\widetilde{\Sigma}_i$ over $x_{0}$ with their canonical orientations (i.e. that induced by $V_{i}$) for $i=1,2$.  Then we have
\begin{equation}
\int_{\tilde{\phi}^{-1}(S^{1}_{1})}\tilde{\phi}^*\omega_{1}=\int_{S^{1}_{1}}\omega_{1}>0.
\label{eq:plus}
\end{equation}
Since $s\circ\phi$ is isotopic to the identity, $\tilde{\phi}^{-1}(S^{1}_{1})$ is homotopic to $S^{1}_{2}$ and therefore using that $\tilde{\phi}^*\omega_{1}$ is closed we derive
\[\int_{\tilde{\phi}^{-1}(S^{1}_{1})}\tilde{\phi}^*\omega_{1}=\int_{S^{1}_{2}}\tilde{\phi}^*\omega_{1}.\]
Using \eqref{eq:plus} yields
\[\int_{S^{1}_{2}}f\,\omega_{2}>0\]
and thus $f>0$ as desired.
\end{proof}}

We now prove Proposition \ref{proposition:algebra-iso}.

\begin{proof}[Proof of Proposition \ref{proposition:algebra-iso}] Two Anosov geodesic flows are orbit equivalent via a H\"older homeomorphism isotopic to the identity (see \cite{Ghys-84,Gromov-00} and \cite[Lemma B.1]{Guillarmou-Knieper-Lefeuvre-22}).  The equality of the marked length spectra $\mc{L}_{g_1}=\mc{L}_{g_2}$ 
and the Liv\v sic theorem imply that such orbit equivalence can be upgraded to a conjugacy $\phi$.
Moreover, such a conjugacy is necessarily smooth for three-dimensional contact Anosov flows: the $C^1$-regularity follows from \cite{Feldman-Ornstein-87} and the $C^\infty$ regularity from \cite{Delallave-Moriyon88} (see also \cite{Gogolev-RodriguezHertz-22} for a more general smoothness result on conjugacies of three-dimensional volume preserving Anosov flows).

As $\phi$ maps $E^s_{2}$ (resp. $E^u_2$) to $E^s_1$ (resp. $E^u_1$), we get that $\ker \lambda_2 = E^s_2 \oplus E^u_2$ is mapped by $\phi$ to $\ker \lambda_1$. Hence, $\phi^* \lambda_1 = \lambda_2$ and $\phi^*(-\lambda_1 \wedge \dd \lambda_1) = -\lambda_2 \wedge \dd \lambda_2$, that is, $\phi$ preserves the Liouville volume form hence the orientation.

As $\phi$ preserves the orientation, $\Sigma$ is connected, and $d \phi^{\top}(\lambda_1) = \lambda_2$, we deduce that $\mc{C}' := d\phi^{\top}(\mc{C}_{1}) = \pm \mc{C}_{2}$. Indeed, since $d\phi^\top$ maps connected sets to connected sets, $(E_1^u)^*$ to $(E^u_2)^*$ and $(E^s_1)^*$ to $(E^s_2)^*$, $d\phi^\top(\mc{C}_1)$ must be one of the 4 cones in the right Figure \ref{figure} (inside the subspace $\{\xi(X_2(v))=0\}$). Now, if $\mc{C}_{1}$ were mapped by $\phi$ to one of the two cones on Figure \ref{figure} other than $\pm \mc{C}_{2}$, $\phi$ would reverse the orientation, which is absurd.

{\color{black}We now further claim that $\mc{C}' = + \mc{C}_2$. Indeed, if $\mc{C}' = -\mc{C}_2$ then
$\phi$ flips the orientation of weak unstable leaves which is a contradiction by Lemma \ref{lemma:gabriel}. Hence $\mc{C}' = + \mc{C}_2$.}

Finally, we show that $\phi^*$ maps $\mc{A}_+(S\Sigma_1)$ to $\mc{A}_+(S\Sigma_2)$. Let $f \in \mathcal{A}_+(S\Sigma_1)$ and define $f' := \phi^*f \in \mc{D}'(S\Sigma_2)$. By Lemma \ref{lemma:wf-c}, the wavefront set of $f$ is contained in $\mc{C}_{1}$ so the wavefront set of $f'$ is contained in the cone $\mc{C}' = d\phi^{\top}(\mc{C}_{1}) = \mc{C}_2$. Then $\mathbf{S}_- f' \in C^\infty(S\Sigma_2)$ and, since $X_2f'=\phi^*(X_1f)=0$, we also have that $X_{2}(\mathbf{S}_- f') = \eta_+ f'_0 + \eta_+ f'_{-1}$. By the tensor tomography Theorem \ref{theorem:tensor}, we deduce that $\mathbf{S}_- f'=f_0'$  and thus  $f'$ is fiberwise holomorphic. This proves the claim. That $\phi^*:\mathcal A_+(S\Sigma_1) \longrightarrow \mathcal A_+(S\Sigma_2)$ is an isomorphism now follows from the fact that it admits an inverse $(\phi^{-1})^*$.
\end{proof}

\begin{remark} 
\textcolor{black}{The map $\phi^*:\mathcal A_+(S\Sigma_1) \longrightarrow \mathcal A_+(S\Sigma_2)$ is an algebra isomorphism as the product property is obviously satisfied.}
\end{remark}

\subsection{Extension operator. Period preservation}

\label{ssection:period}

The aim of this paragraph is to show that, in some sense, the isomorphism $\phi^*$ maps holomorphic differentials of the first surface to the second, and that it preserves periods.

We first claim that the map
\begin{equation}
\label{equation:surj-pi1}
{\pi_1}_* : \mc{A}_{{\color{black}+}}(S\Sigma)  \longrightarrow H^{0}_J(\Sigma, K)
\end{equation}
is well-defined. Indeed, a fiberwise holomorphic invariant distribution $f \in \mc{A}_{{\color{black}+}}(S\Sigma)$ satisfies $Xf = 0$ so its first Fourier mode $f_1$ satisfies $\eta_- f_1 = 0$ by Lemma \ref{lem:A+_holof_0f_1} which, in turn, is equivalent to $\bar{\partial} {\pi_1}_* f = 0$.

A key input that we now use is that the map \eqref{equation:surj-pi1} is known to be surjective by \cite[Theorem 1.5]{Paternain-Salo-Uhlmann-14-2}. This allows us to define a right-inverse $\mathbf{e}_1$ such that
\[\e_1 : H^0_J(\Sigma,K) \longrightarrow \mc{A}_{{\color{black}+}}(S\Sigma) , \quad {\pi_1}_* \circ \e_1 = \mathbf{1}_{H^0_J(\Sigma,K)},\]
which we call the \emph{extension operator}.

\begin{remark}
More generally, using the spectral theory of Anosov flows, it was proved in \cite[Corollary 3.8]{Guillarmou-17-1} that the map ${\pi_n}_* : \mc{A}_{+}(S\Sigma)\cap \bigcap_{m<n} \ker {\pi_m}_* \longrightarrow H^0_J(\Sigma, K^n)$ is surjective for all $n \geq 0$ (along with other more general results on the existence of flow-invariant distributions with prescribed $n$-th Fourier mode). {\color{black} We also observe that $\mathbf{e}_1$ is not uniquely defined since $\ker {\pi_1}_*\cap \mc{A}_+(S\Sigma)\not=0$.}
\end{remark}

Let 
\[
F : H^0_{J_1}(\Sigma, K) \longrightarrow H^0_{J_2}(\Sigma, K)
\]
be the map defined by the following commutative diagram:
\[
\xymatrix{
    \mc{A}_{{\color{black}+}}(S\Sigma_{1})  \ar[r]^{\phi^*}  & \mc{A}_{{\color{black}+}}(S\Sigma_{2}) \ar[d]_{{\pi_1}_*} \\
    H^0_{J_1}(\Sigma, K) \ar[u]_{\e_1} \ar[r]^F & H^0_{J_2}(\Sigma, K)
  }
\]
The following result unlocks Theorem \ref{theorem:main}:

\begin{proposition}
\label{proposition:period}
The map
\[
F : H^0_{J_1}(\Sigma, K) \longrightarrow H^0_{J_2}(\Sigma, K)
\]
 is a period-preserving $\C$-linear isomorphism in the following sense: for all $[\gamma] \in H_1(\Sigma,\Z)$, $\omega \in  H^0_{J_1}(\Sigma, K) $,
 \begin{equation}
 \label{equation:period}
 \int_{[\gamma]} \omega = \int_{[\gamma]} F \omega.
 \end{equation}
\end{proposition}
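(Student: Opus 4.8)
The plan is to verify the two asserted properties separately: first that $F$ is a $\mathbb{C}$-linear isomorphism, and second that it preserves periods, the latter being where the pairing formula of Lemma \ref{lemma:magical} does all the work. The linearity of $F$ is immediate from its definition via the commutative diagram, since $\phi^*$ (pullback of distributions) and ${\pi_1}_*$ are both $\mathbb{C}$-linear and $\e_1$ is $\mathbb{C}$-linear by construction. To see that $F$ is an isomorphism, I would exhibit its inverse: running the same construction with the roles of $g_1$ and $g_2$ interchanged (using the conjugacy $\phi^{-1}$ and an extension operator $\e_1$ on $S\Sigma_2$) yields a map in the reverse direction, and the period-preservation statement below, once established, forces $F$ to be injective (a nonzero holomorphic $1$-form has a nonzero period on some cycle since the period map is a homological statement detecting the de Rham class), hence bijective between spaces of equal finite dimension.

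The heart of the matter is the period identity \eqref{equation:period}. Fix $\omega \in H^0_{J_1}(\Sigma,K)$ and a homology class $[\gamma]$, represented by the unique closed $g_1$-geodesic $\gamma_1$ in its free homotopy class; under the conjugacy $\phi$ this corresponds to the closed $g_2$-geodesic $\gamma_2$ in the same free homotopy class, so $[\gamma_1]=[\gamma_2]=[\gamma]$ in $H_1(\Sigma,\mathbb{Z})$. Set $f := \e_1\omega \in \mc{A}_+(S\Sigma_1)$, so that ${\pi_1}_* f = \omega$ and, by the diagram, ${\pi_1}_*(\phi^* f) = F\omega$. Applying Lemma \ref{lemma:magical} on each surface gives
\[
2i\int_{S^1\gamma_1} f\, \dd\lambda_1 = -\int_{[\gamma]} {\pi_1}_* f = -\int_{[\gamma]} \omega, \qquad
2i\int_{S^1\gamma_2} \phi^* f\, \dd\lambda_2 = -\int_{[\gamma]} {\pi_1}_*(\phi^* f) = -\int_{[\gamma]} F\omega.
\]
Thus the proposition reduces to the single equality of Liouville pairings
\[
\int_{S^1\gamma_1} f\, \dd\lambda_1 = \int_{S^1\gamma_2} \phi^* f\, \dd\lambda_2.
\]

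To prove this last equality I would change variables by $\phi$. Since $\phi : S\Sigma_2 \to S\Sigma_1$ conjugates the two geodesic flows and maps the $g_2$-geodesic $\gamma_2$ to the $g_1$-geodesic $\gamma_1$ as flow orbits, it restricts to a diffeomorphism of the circle bundles $\phi : S^1\gamma_2 \to S^1\gamma_1$. Crucially, from the proof of Proposition \ref{proposition:algebra-iso} we have $\phi^*\lambda_1 = \lambda_2$, hence $\phi^*(\dd\lambda_1) = \dd\lambda_2$, and therefore $\phi^*(f\,\dd\lambda_1) = (\phi^* f)\,\dd\lambda_2$ as distributional $2$-forms. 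By the naturality of integration of (distributional) forms under pullback, $\int_{S^1\gamma_1} f\,\dd\lambda_1 = \int_{\phi^{-1}(S^1\gamma_1)} \phi^*(f\,\dd\lambda_1) = \int_{S^1\gamma_2} (\phi^* f)\,\dd\lambda_2$, provided the orientations match. The main obstacle, and the step requiring genuine care, is precisely this orientation bookkeeping: I must confirm that $\phi$ carries the chosen orientation of $S^1\gamma_2$ (given in \S\ref{ssection:pairing} by the $2$-form $(\cos\theta\,\lambda + \sin\theta\,\beta)\wedge\psi$) to that of $S^1\gamma_1$ with a $+$ sign, rather than reversing it. This is exactly what the conclusion $\mc{C}' = +\mc{C}_2$ of Proposition \ref{proposition:algebra-iso} (established through Lemma \ref{lemma:gabriel}, which shows $\phi$ preserves the orientation of the weak unstable bundle) is designed to guarantee: together with $\phi^*\lambda_1=\lambda_2$ and preservation of the Liouville volume, it pins down that $\phi$ respects the co-oriented contact structures and hence the induced orientations of the circle bundles, so no sign is lost. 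Once the orientations are confirmed compatible, the distributional pairing with $S^1\gamma$ being well-defined (by the wavefront set argument already given in the proof of Lemma \ref{lemma:magical}, which survives pullback by $\phi$ since $d\phi^\top$ maps $\mc{C}_1$ to $\mc{C}_2$), the equality of pairings follows and the period identity is proved.
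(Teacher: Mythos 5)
Your reduction of the proposition is sound and matches the paper up to a point: applying Lemma \ref{lemma:magical} on both unit tangent bundles (legitimate, since $\phi^*f \in \mc{A}_+(S\Sigma_2)$ by Proposition \ref{proposition:algebra-iso}) correctly reduces \eqref{equation:period} to the single equality $\int_{S^1\gamma_1} f\,\dd\lambda_1=\int_{S^1\gamma_2}\phi^*f\,\dd\lambda_2$ with $f=\e_1\omega$, and the final change of variables via $\phi^*\lambda_1=\lambda_2$ together with the orientation facts ($\mc{C}'=+\mc{C}_2$) also appears in the paper. The genuine gap is the sentence ``$\phi$ restricts to a diffeomorphism of the circle bundles $\phi:S^1\gamma_2\to S^1\gamma_1$''. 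This is false. A conjugacy intertwines the two flows, hence maps \emph{orbits} to \emph{orbits}: it carries the closed curve $\{(\gamma_2(t),\dot\gamma_2(t))\}\subset S\Sigma_2$ onto the closed orbit over $\gamma_1$, but it has no reason whatsoever to respect the fibration $\pi_2:S\Sigma_2\to\Sigma$. The torus $S^1\gamma_2=\pi_2^{-1}(\gamma_2)$ consists almost entirely of vectors \emph{not} tangent to $\gamma_2$; each such vector lies on a different orbit of the $g_2$-flow (generically a dense one), and its image under $\phi$ need not project to a point of $\gamma_1$. Consequently $\phi^{-1}(S^1\gamma_1)\neq S^1\gamma_2$ in general, and the identity $\int_{\phi^{-1}(S^1\gamma_1)}\phi^*(f\,\dd\lambda_1)=\int_{S^1\gamma_2}(\phi^*f)\,\dd\lambda_2$ fails at the level of the domain of integration. (Were conjugacies fiber-preserving, marked length spectrum rigidity would be a nearly trivial statement.) Your orientation discussion, while pertinent to the change-of-variables step, is downstream of this incorrect identification of sets.

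What is actually true is much weaker: since $s\circ\phi$ is isotopic to the identity (Proposition \ref{proposition:algebra-iso}, item (ii)), the tori $\phi(S^1\gamma_2)$ and $S^1\gamma_1$ are \emph{homologous} in $H_2(S\Sigma_1,\Z)$, and the paper bridges this gap with two ingredients absent from your proposal: (a) $f\,\dd\lambda_1$ is a \emph{closed} distributional $2$-form (because $X_1f=0$ and $\iota_{X_1}\dd\lambda_1=0$), and (b) the regularization of \cite[Lemma 2.1]{Dyatlov-Zworski-17}, which writes $f\,\dd\lambda_1=\alpha+\dd u$ with $\alpha$ smooth and closed and $\WF(u)\subset\WF(f)$. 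The wavefront calculus (using Lemma \ref{lemma:wf-c}, \eqref{equation:disjoint}, and $d\phi^\top(\mc{C}_1)\subset\mc{C}_2$) shows that the pairings of $\dd u$ with both tori are well-defined and vanish by exactness, while the integrals of the smooth closed form $\alpha$ over the two homologous tori coincide by Stokes. This gives $\int_{S^1\gamma_1} f\,\dd\lambda_1=\int_{\phi(S^1\gamma_2)} f\,\dd\lambda_1$, after which your change of variables and the second application of Lemma \ref{lemma:magical} do finish the proof. Without the homological comparison and the closed-form regularization, the argument does not close.
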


That $F$ is an isomorphism follows immediately from the period preservation \eqref{equation:period}. Hence, the only non-trivial part of Proposition \ref{proposition:period} is \eqref{equation:period}. Here and below, given a homology class $[\gamma] \in H_1(\Sigma,\Z)$, we shall denote by $\gamma_{g_i}$ a closed oriented geodesic (for the metric $g_i, i=1,2$) representing $[\gamma]$, and $\gamma$, when no preferred metric is chosen.
If $\omega \in H^0_J(\Sigma,K)$, $[\gamma] \in H_1(\Sigma,\Z)$, Lemma \ref{lemma:magical} yields
\begin{equation}
\label{equation:integration}
2 i  \int_{S^1\gamma} \mathbf{e}_1 \omega~\dd\lambda = \textcolor{black}{-}\int_{[\gamma]} \omega.
\end{equation}
We can now complete the proof of Proposition \ref{proposition:period}.

\begin{proof}[Proof of Proposition \ref{proposition:period}]
Let $[\gamma] \in H_1(\Sigma,\Z)$ and let $\gamma_{g_1}(c), \gamma_{g_2}(c)$ be two geodesic representatives for $[\gamma]$ (with respect to $g_1$ and $g_2$) in the same free homotopy class $c \in \mc{C}$. 
\textcolor{black}{Note that $[s^{-1}(S^1\gamma_{g_2}(c))]=[S^1\gamma_{g_1}(c)]$ in $H_2(S\Sigma_1,\Z)$ and since
$s\circ \phi$ is isotopic to the identity, we get  $[\phi(S^1\gamma_{g_2}(c))] = [s^{-1}(S^1\gamma_{g_2}(c))]=[S^1\gamma_{g_1}(c)]$ in $H_2(S\Sigma_1,\Z)$.}

\textcolor{black}{Notice that the pairing $(\phi(S^1\gamma_{g_2}(c)), \mathbf{e}_1 \omega \dd\lambda_1)$ is well-defined, similarly to Lemma \ref{lemma:magical}: indeed, $N^*\phi(S^1\gamma_{g_2}(c))\cap {\rm WF}( \mathbf{e}_1 \omega \dd\lambda_1)=\emptyset$ since $\mc{C}_2\cap N^*S^1\gamma_{g_2}(c)=\emptyset$ and $\phi^*( \mathbf{e}_1 \omega~\dd\lambda_1)\subset d\phi^{\top}(\mc{C}_1)\subset \mc{C}_2$ by the proof of Proposition \ref{proposition:algebra-iso}.}

Moreover, given a distribution $f \in \mc{A}_+(S\Sigma_1)$, observe that $f \dd\lambda$ is a closed $2$-form. We claim that
\[
\int_{S^1\gamma_{g_1}(c)} \mathbf{e}_1 \omega~\dd\lambda_1= \int_{\phi(S^1\gamma_{g_2}(c))} \mathbf{e}_1 \omega~\dd\lambda_1.
\]
\textcolor{black}{Indeed, by \cite[Lemma 2.1]{Dyatlov-Zworski-17},  there is a smooth closed $2$-form $\alpha$ such that $f\dd\lambda =\alpha +\dd u$ where ${\rm WF}(u)\subset {\rm WF}(f)$.}
Hence, both pairings $(S^1\gamma_{g_1}(c), \dd u)$ and $(\phi(S^1\gamma_{g_2}(c)), \dd u)$ are well-defined by the wavefront set calculus (same arguments as for $f\dd\lambda$) and equal to $0$ since $\dd u$ is exact.

Then, we get:
\[
\begin{split}
\int_{S^1\gamma_{g_1}(c)} \mathbf{e}_1 \omega~\dd\lambda_1 & = \int_{S^1\gamma_{g_1}(c)} \alpha =  \int_{\phi(S^1\gamma_{g_2}(c))} \alpha = \int_{\phi(S^1\gamma_{g_2}(c))} \mathbf{e}_1 \omega~\dd\lambda_1,
\end{split}
\]
where the second equality simply follows from the fact that $\alpha$ is \textcolor{black}{closed} and $[S^1\gamma_{g_1}(c)] = [\phi(S^1\gamma_{g_2}(c))]$ in $H_2(S\Sigma_1,\Z)$.

As a consequence, given $\omega \in H^0_{J_1}(\Sigma_1,K)$ we get by \eqref{equation:integration} and the discussion above that
\[
\begin{split}
\int_{[\gamma]} \omega &= \textcolor{black}{-}2i \int_{S^1\gamma_{g_1}(c)} \mathbf{e}_1 \omega~\dd\lambda_1 \\
&  =\textcolor{black}{-} 2i \int_{\phi(S^1\gamma_{g_2}(c))} \mathbf{e}_1 \omega~\dd\lambda_1 = \textcolor{black}{-}2i\int_{S^1\gamma_{g_2}(c)} \phi^*(\mathbf{e}_1 \omega~\dd\lambda_1) \\
& =\textcolor{black}{-} 2i \int_{S^1\gamma_{g_2}(c)} \phi^*(\mathbf{e}_1\omega) ~\dd \lambda_2 = \int_{[\gamma]} {\pi_1}_*  \phi^*(\mathbf{e}_1\omega) = \int_{[\gamma]} F\omega.
\end{split}
\]
This concludes the proof.
\end{proof}

We can now conclude the proof of Proposition \ref{proposition:key}.

\begin{proof}[Proof of Proposition \ref{proposition:key}]
If $\mc{L}_{g_1} = \mc{L}_{g_2}$, we get by the above results that
\[
F : H^0_{J_1}(\Sigma,K) \longrightarrow H^0_{J_2}(\Sigma,K)
\]
is a period-preserving $\C$-linear complex isomorphism. {\color{black}We claim that $(\Sigma,J_1)$ and $(\Sigma,J_2)$ have same period matrix. Indeed, let $\left\{\zeta_i\right\}$ be a basis of the space $H^0_{J_1}(\Sigma,K)$ such that \eqref{equation:delta} is satisfied. Then $\left\{F(\zeta_i)\right\}$ is a basis of $H^0_{J_2}(\Sigma,K)$ such that \eqref{equation:delta} is also satisfied. Moreover,
\[
\Pi(J_1)_{jk} = \int_{b_j} \zeta_{k} = \int_{b_j} F(\zeta_k) = \Pi(J_2)_{jk}.
\] 
By the Torelli Theorem \ref{theorem:torelli}, it follows that $(\Sigma,J_1)$ and $(\Sigma,J_2)$ are biholomorphic and differ in the Teichm\"uller space by an element $[\psi] \in \mathrm{MCG}(\Sigma)$.} This is the content of Proposition \ref{proposition:key}.
\end{proof}

\subsection{End of the proof}

We now prove that $\mc{L}_g$ determines the class of $[J_g] \in \mc{T}(\Sigma)$ in the Teichmüller space of $\Sigma$.

\begin{proposition}
\label{proposition:key2}
Let $g_1,g_2$ be two smooth Anosov metrics. If $\mc{L}_{g_1} = \mc{L}_{g_2}$ then $[J_{g_1}]=[J_{g_2}]$ in $\mc{T}(\Sigma)$. Equivalently, there exists a diffeomorphism $\psi : \Sigma \to \Sigma$ isotopic to the identity such that $\psi^* g_2 = e^{2 f} g_1$ for some $f \in C^\infty(\Sigma)$.
\end{proposition}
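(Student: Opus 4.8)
The plan is to upgrade Proposition \ref{proposition:key}, which only gives equivalence of complex structures modulo the full mapping class group, to equivalence modulo the subgroup of mapping classes isotopic to the identity. Proposition \ref{proposition:key} tells us that $[J_1]$ and $[J_2]$ coincide in $\mc{M}(\Sigma) = \mc{T}(\Sigma)/\mathrm{MCG}(\Sigma)$, so there is a diffeomorphism $\psi$ with $\psi^* J_2 = J_1$, but a priori $[\psi] \in \mathrm{MCG}(\Sigma)$ need not be trivial. The obstacle is exactly this discrepancy between the Torelli-theoretic conclusion and the finer Teichmüller-space statement we want. The key idea is that the hypothesis $\mc{L}_{g_1} = \mc{L}_{g_2}$ is stable under passing to finite covers, so we can apply the period-matrix machinery not just on $\Sigma$ but on every finite cover simultaneously, and use Lemma \ref{lemma:cover} to conclude that $[J_1] = [J_2]$ already in $\mc{T}(\Sigma)$.

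\begin{proof}[Proof of Proposition \ref{proposition:key2}]
Assume $\mc{L}_{g_1} = \mc{L}_{g_2}$. By Proposition \ref{proposition:key}, the classes $[J_1]$ and $[J_2]$ agree in the moduli space $\mc{M}(\Sigma)$, so there is a mapping class whose action carries one to the other; we must promote this to equality in $\mc{T}(\Sigma)$. First I would observe that the hypothesis is inherited by finite covers: given a finite-index subgroup $\Gamma \leq \pi_1(\Sigma)$ corresponding to a finite cover $p : \hat{\Sigma} \to \Sigma$, the lifted metrics $p^* g_1, p^* g_2$ are again Anosov, and a free homotopy class $\hat{c}$ on $\hat{\Sigma}$ maps to a class $c$ on $\Sigma$ whose geodesic representative lifts to the geodesic representative of $\hat{c}$; hence $\mc{L}_{p^*g_1}(\hat{c}) = \mc{L}_{g_1}(c) = \mc{L}_{g_2}(c) = \mc{L}_{p^*g_2}(\hat{c})$, so $\mc{L}_{p^* g_1} = \mc{L}_{p^* g_2}$. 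Applying Proposition \ref{proposition:key} to $(\hat{\Sigma}, p^*g_1)$ and $(\hat{\Sigma}, p^*g_2)$ shows that the lifted complex structures $\hat{J_1} := p^* J_1$ and $\hat{J_2} := p^* J_2$ have the same period matrix, hence agree in $\mc{M}(\hat{\Sigma})$, that is, lie in the same $\mathrm{MCG}(\hat{\Sigma})$-orbit.

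Now I would invoke Lemma \ref{lemma:cover}: if $[J_1] \neq [J_2]$ in $\mc{T}(\Sigma)$, then there exists a finite cover $p : \hat{\Sigma} \to \Sigma$ on which the lifts $p^* J_1$ and $p^* J_2$ fall in distinct $\mathrm{MCG}(\hat{\Sigma})$-orbits. This directly contradicts the conclusion of the previous paragraph. Therefore $[J_1] = [J_2]$ in $\mc{T}(\Sigma)$, which is precisely the statement that there is a diffeomorphism $\psi$ isotopic to the identity with $\psi^* J_2 = J_1$. Since $J_1$ and $\psi^* J_2$ induce the same conformal class and orientation, the metrics $\psi^* g_2$ and $g_1$ are conformally equivalent, so $\psi^* g_2 = e^{2f} g_1$ for some $f \in C^\infty(\Sigma)$, as claimed.
\end{proof}

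The main obstacle in this argument is the cover-theoretic input encapsulated in Lemma \ref{lemma:cover}: one needs that two distinct points of Teichmüller space can always be separated, after lifting, into different mapping-class-group orbits on some finite cover. This is a genuinely nontrivial statement about the residual structure of mapping class groups and surface groups (reflected in the acknowledgements, which thank several geometers for discussions of exactly this lemma). Everything else, the stability of the marked length spectrum under lifts and the passage from equality of period matrices to a Teichmüller-space conclusion, is a formal consequence of the tools already assembled.
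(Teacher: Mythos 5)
Your proposal is correct and follows essentially the same route as the paper: lift the equality $\mc{L}_{g_1}=\mc{L}_{g_2}$ to every finite cover, apply Proposition \ref{proposition:key} on the cover, and conclude by the contrapositive of Lemma \ref{lemma:cover}. The paper's own proof is exactly this two-line combination; you merely spell out the (correct) verification that the marked length spectrum equality passes to finite covers, which the paper leaves implicit.
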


Proposition \ref{proposition:key2} will follow from Proposition \ref{proposition:key} and the following:

\begin{lemma}
\label{lemma:cover}
Let $J_1$ and $J_2$ be two complex structures on $\Sigma$ compatible with orientation such that $[J_1] \neq [J_2]$ in $\mc{T}(\Sigma)$. Then, there exists a finite cover $\Sigma' \to \Sigma$ such that the lifts $[J_1'], [J_2'] \in \mc{T}(\Sigma')$ are not in the same $\mathrm{MCG}(\Sigma')$-orbit.
\end{lemma}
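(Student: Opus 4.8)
The plan is to pass to hyperbolic geometry via uniformization and to separate the two structures by a length-spectrum computation on a carefully chosen finite cover. First I would reduce to the essential case: if $[J_1]$ and $[J_2]$ already determine distinct points of $\mc{M}(\Sigma)$ the trivial cover suffices, so I may assume they lie in a common $\mathrm{MCG}(\Sigma)$-orbit. Setting $\Gamma := \pi_1(\Sigma)$ and letting $\rho_1,\rho_2:\Gamma\to\mathrm{PSL}_2(\mathbb{R})$ be the Fuchsian uniformizations of $(\Sigma,J_1)$ and $(\Sigma,J_2)$, the hypothesis gives $\alpha_0\in\mathrm{Aut}(\Gamma)$ (via Dehn--Nielsen--Baer) with $\rho_2$ conjugate to $\rho_1\circ\alpha_0$, while $[J_1]\neq[J_2]$ in $\mc{T}(\Sigma)$ forces $\alpha_0$ to be \emph{non-geometric}, i.e. induced by no isometry of $X:=\mathbb{H}^2/\rho_1(\Gamma)$.

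The key observation is that for a finite-index $\Gamma'\leq\Gamma$ the pulled-back structures uniformize the two covers $\mathbb{H}^2/\rho_1(\Gamma')$ and $\mathbb{H}^2/\rho_1(\alpha_0\Gamma')$ of $X$, and that $[J_1']=[J_2']$ in $\mc{M}(\Sigma')$ holds precisely when these covering surfaces are biholomorphic, equivalently isometric; since the class in $\mc{M}(\Sigma')$ is intrinsic, this accounts for all of $\mathrm{MCG}(\Sigma')$, not merely the mapping classes descending to $\Sigma$. It therefore suffices to exhibit a single $\Gamma'$ for which these two covers have \emph{different length spectra}: since equal length spectra is necessary for an isometry, this yields non-isometry, and it conveniently bypasses the arithmetic/commensurator subtleties that a direct comparison of the subgroups $\rho_1(\Gamma')$ and $\rho_1(\alpha_0\Gamma')$ would entail.

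To manufacture the discrepancy I would proceed as follows. Because a hyperbolic structure is determined by its marked simple-length spectrum and mapping classes permute simple closed curves, non-geometricity of $\alpha_0$ furnishes a \emph{simple} closed geodesic $\gamma_0$ with $L:=\ell_X(\gamma_0)<\ell_X(\alpha_0\gamma_0)$ (replacing $\gamma_0$ by $\alpha_0\gamma_0$ if needed). Taking $\Gamma'=\ker q$ for a finite quotient $q:\Gamma\to Q$ and using the monodromy description of lifts, the number of length-$L$ closed geodesics in $\mathbb{H}^2/\rho_1(\Gamma')$, respectively in $\mathbb{H}^2/\rho_1(\alpha_0\Gamma')$, is controlled by how many length-$L$ downstairs classes lie in $\ker q$, respectively have their $\alpha_0^{-1}$-image in $\ker q$. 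I would then choose $q$ so that $q(\gamma_0)=1$---making $\gamma_0$ lift to length-$L$ geodesics in the first cover, while its counterpart $\alpha_0\gamma_0$ sits at length $\ell_X(\alpha_0\gamma_0)\neq L$ in the second---whilst arranging that the finitely many other geodesics that could contribute at length $L$ are excluded from the relevant kernels, so that the first cover acquires length-$L$ geodesics the second one lacks and the two length spectra differ at $L$.

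The hard part is precisely the existence of such a $q$. One needs a finite quotient killing $\gamma_0$ in which the finitely many competing classes---those of length $\leq L$, together with their $\alpha_0^{\pm}$-images---are kept off the relevant kernels; this fails for any competitor forced into $\ker q$ because it lies in the normal closure $\langle\!\langle\gamma_0\rangle\!\rangle$ (for instance one homologous to a multiple of $[\gamma_0]$). The saving feature is that $\gamma_0$ is simple, so $\Gamma/\langle\!\langle\gamma_0\rangle\!\rangle$ is a torsion-free, residually finite free product of surface and free groups; hence any competitor outside $\langle\!\langle\gamma_0\rangle\!\rangle$ can be separated from the identity in a finite quotient killing $\gamma_0$, and finitely many can be handled at once. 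Guaranteeing that the competitors genuinely avoid $\langle\!\langle\gamma_0\rangle\!\rangle$---so that the surviving length-$L$ counts really differ---is the delicate point, and it is here that I expect to need a judicious choice of the simple curve $\gamma_0$ together with the conjugacy-separability/LERF properties of surface groups. Once a non-isometric pair of covers is produced, the lemma follows at once.
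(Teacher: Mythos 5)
Your framework (uniformize, reduce to a non-geometric $\alpha_0\in\mathrm{Aut}(\Gamma)$ with $\rho_2\sim\rho_1\circ\alpha_0$, note that equality in $\mc{M}(\Sigma')$ means the covers $\mathbb{H}^2/\rho_1(\Gamma')$ and $\mathbb{H}^2/\rho_1(\alpha_0\Gamma')$ are isometric, and aim to distinguish them by their \emph{unmarked} length spectra) is sound, and your starting point --- a simple closed curve $\gamma_0$ with $\ell_X(\gamma_0)\neq\ell_X(\alpha_0\gamma_0)$, obtained because the marked simple length spectrum determines the point in Teichm\"uller space --- is essentially the same as the paper's (which invokes the $9g-9$ theorem). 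But the decisive step is missing, and you say so yourself. Your construction needs a finite quotient $q:\Gamma\to Q$ with $q(\gamma_0)=1$ while $q(\alpha_0^{-1}\delta)\neq 1$ for every one of the finitely many classes $\delta$ of length $L$ (and in fact also for classes of length $L/d$ whose images would have order $d$, since geodesics of length $L$ upstairs can arise from shorter primitive classes downstairs --- an accounting issue you do not address). Residual finiteness of $\Gamma/\langle\!\langle\gamma_0\rangle\!\rangle$ only separates competitors that lie \emph{outside} the normal closure $\langle\!\langle\gamma_0\rangle\!\rangle$; if some $\alpha_0^{-1}\delta$ lies inside it, every quotient killing $\gamma_0$ kills it too, and your length-$L$ count argument collapses. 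You offer no construction of a $\gamma_0$ avoiding this, nor an argument that one exists, so the proof is incomplete precisely at its hard point; repairing it would seem to require genuinely finer separability/omnipotence-type input for surface groups, which is not routine.

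For contrast, the paper's proof packages exactly this difficulty into a quoted geometric covering theorem and then avoids conjugacy-class bookkeeping altogether. Starting from a simple closed curve $\alpha$ with $\ell_{h_1}(\alpha)\neq\ell_{h_2}(\alpha)$, it takes (via a result of Alvarez--Brum--Mart\'inez--Potrie) a finite cover in which $\alpha$ admits a lift $\alpha'$ covering it $1$-to-$1$ while \emph{every other} simple closed curve has $h_1'$-length greater than a prescribed $K$; a single bi-Lipschitz constant $C$ comparing $\ell_{h_1}$ and $\ell_{h_2}$ on all free homotopy classes then forces those curves to be longer than $K/C$ for $h_2'$ as well. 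Choosing $K>C\max(\ell_{h_1}(\alpha),\ell_{h_2}(\alpha))$ makes $\alpha'$ the systole of both lifted structures, with the two distinct lengths $\ell_{h_1}(\alpha)\neq\ell_{h_2}(\alpha)$; since the systole is an invariant of the $\mathrm{MCG}(\Sigma')$-orbit, the lemma follows. So where you try to control which conjugacy classes enter the kernel of a finite quotient (and get stuck on the normal-closure obstruction), the paper controls \emph{all} short simple curves at once geometrically and compares a single moduli-space invariant, the systole. If you want to salvage your approach, the cited covering theorem is exactly the tool that replaces your missing quotient $q$.
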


Let us first derive Proposition \ref{proposition:key2} from this.

\begin{proof}[Proof of Proposition \ref{proposition:key2}]
If $\mc{L}_{g_1} = \mc{L}_{g_2}$ on $\Sigma$, then the same equality $\mc{L}_{g'_1} = \mc{L}_{g'_2}$ holds on any finite cover $\Sigma' \to \Sigma$. Thus, the desired conclusion follows by combining directly Proposition \ref{proposition:key} with Lemma \ref{lemma:cover}.
\end{proof}

The proof of Lemma \ref{lemma:cover} was kindly explained to us by M. Wolff.

\begin{proof}[Proof of Lemma \ref{lemma:cover}]
In order to simplify notation, given a free homotopy class $c\in\mc{C}$ we write $\ell_g(c)$ for the length of the unique $g$-geodesic in the free homotopy class $c$. We use the identification of Teichmüller space $\mc{T}(\Sigma)$ with marked hyperbolic structures on $\Sigma$. Let $[h_1], [h_2] \in \mc{T}(\Sigma)$ be the (distinct) hyperbolic structures given by $J_1$ and $J_2$. Let $\alpha \in \mc{C}(\Sigma)$ be a free homotopy class of simple closed curves on $\Sigma$ such that $\ell_{h_1}(\alpha) \neq \ell_{h_2}(\alpha)$. Note that such a class always exists otherwise we would have $[h_1]=[h_2]$ by the $9g-9$ Theorem (see \cite[Theorem 10.7]{Farb-Margalit-11}).

We then consider a finite cover $\Sigma' \to \Sigma$ which unfolds every simple closed curve, except $\alpha$. More precisely, for $K > 0$ fixed, we consider a finite cover $\pi : \Sigma' \to \Sigma$ such that the following holds: denoting by $h_1', h_2'$ the lifts of $h_1,h_2$, there exists a free homotopy class of simple closed curves $\alpha' \in \mc{C}(\Sigma')$ on $\Sigma'$ such that $\pi : \gamma_{h'_1}(\alpha') \to \gamma_{h_1}(\alpha)$ is $1$-to-$1$, and for every free homotopy class of simple closed curves $\beta \in \mc{C}(\Sigma')$ such that $\beta \neq \alpha'$ , $\ell_{h'_1}(\beta) > K$ (see \cite[Appendix A, Theorem C]{Alvarez-Brum-Martinez-Potrie-19} for the existence of such a covering). In particular, choosing $K > 0$ such that $K > \ell_{h_1}(\alpha)$, we can guarantee that $\gamma_{h'_1}(\alpha')$ is the systole of $(\Sigma',h_1')$ with length $\ell_{h_1}(\alpha)$.

Now, as $\Sigma$ is compact, there exists a constant $C > 1$ such that for all $c \in \mc{C}$ (not necessarily simple),
\[
1/C \leq \dfrac{\ell_{h_1}(c)}{\ell_{h_2}(c)} \leq C.
\]
We fix $K > 0$ in the construction above such that $K/C > \max(\ell_{h_1}(\alpha),\ell_{h_2}(\alpha))$. We claim that $\gamma_{h'_2}(\alpha')$ is also the systole of $(\Sigma',h_2')$ with length $\ell_{h_2}(\alpha)$. Indeed, observe first that $\pi : \gamma_{h'_2}(\alpha') \to \gamma_{h_2}(\alpha)$ is also $1$-to-$1$ (so $\ell_{h_2'}(\alpha') = \ell_{h_2}(\alpha)$), and for every free homotopy class of simple closed curves $\beta \in \mc{C}(\Sigma'), \beta \neq \alpha'$, one has:
\[
\begin{split}
\ell_{h_2'}(\beta) & =\ell_{h_1'}(\beta) \cdot \dfrac{\ell_{h_2'}(\beta)}{\ell_{h_1'}(\beta)} \\
& = \ell_{h_1'}(\beta) \cdot \dfrac{\ell_{h_2}(\pi(\beta))}{\ell_{h_1}(\pi(\beta))} > K/C > \max(\ell_{h_1}(\alpha),\ell_{h_2}(\alpha)) \geq \ell_{h_2'}(\alpha').
\end{split}
\]
As a consequence, the systoles of $(\Sigma',h_1')$ and $(\Sigma',h_2')$ have different lengths, so $[h_1']$ and $[h_2']$ are not in the same $\mathrm{MCG}(\Sigma')$-orbit.
\end{proof}

Finally, we complete the proof of Theorem \ref{theorem:main}.

\begin{proof}[Proof of Theorem \ref{theorem:main}]
If $\mc{L}_{g_1}=\mc{L}_{g_2}$, Proposition \ref{proposition:key2} yields that there exists $\psi : \Sigma \to \Sigma$ isotopic to the identity such that $\psi^*g_1 = e^{2f} g_2$. But then, applying Katok's argument \cite{Katok-88} for conformal metrics with same marked length spectrum, we get that $f=0$. This concludes the proof.
\end{proof}

We conclude with a remark on Theorem \ref{theorem:main} for metrics with low regularity:

\begin{remark}
\label{remark:regularity}Theorem \ref{theorem:main} still holds for metrics with $C^4$-regularity. Indeed, if $g_1$ and $g_2$ are $C^k$-regular, the unit tangent bundles $S\Sigma_{1},S\Sigma_2$ are also $C^k$-regular and the geodesic vector fields $X_1,X_2$ are $C^{k-1}$. The conjugacy $\phi$ in Proposition \ref{proposition:algebra-iso} is then $C^{k-1}$ by \cite[Theorem 1.1, item 1]{Gogolev-RodriguezHertz-22}. By \cite[Theorem 5.5]{Paternain-Salo-Uhlmann-14-2}, the right-inverse $\e_1$ defined in \S\ref{ssection:period} can be constructed with values in $H^{-1}(S\Sigma)$ flow-invariant distributions, and the proof of \cite[Theorem 5.5]{Paternain-Salo-Uhlmann-14-2}, based on the Pestov identity, applies with metrics which are only $C^2$-regular. 

In order to prove that $\phi^*$ maps $\mc{A}_+(S\Sigma_1)$ to $\mc{A}_+(S\Sigma_2)$ in Proposition \ref{proposition:algebra-iso}, one can use the notion of $H^s$ 
wave-front set ${\rm WF}_s$ of distributions (for $s\geq -1$) in the Sobolev space $H^{-1}(S\Sigma_{1,2})$, see \cite[Section 8.2]{Hormander-nonlinear} for a definition. Indeed, the action by pullback by $\phi^*$ of  distributions in $H^{-1}(S\Sigma_1)$ is well-defined and maps to distributions with regularity $H^{-1}(S\Sigma_2)$  if $\phi$ is at least $C^2$, using that $C^1$ embeds into $H^1$, the dual of $H^{-1}$. For $u \in H^{-1}(S\Sigma_1)$, the pull-back $\phi^*u \in H^{-1}(S\Sigma_2)$ is such that $\dd\phi^{\top}$ sends ${\rm WF}_s(u)$ to ${\rm WF}_s(\phi^*u)$ if $\phi$ is $C^{k-1}$ and $-1 \leq s \leq k-2$. The tensor tomography result of Theorem \ref{theorem:tensor} applies to functions and $1$-forms in $H^2$ (and for $C^2$-regular metrics in order to apply \cite[Theorem 5.5]{Paternain-Salo-Uhlmann-14-2} based on the Pestov identity). Hence, we need to work with the ${\rm WF}_s(u)$ wave-front set with $s\geq 2$ and this forces $k\geq s+2\geq 4$.

\textcolor{black}{Finally, in the pairing formula of Lemma \ref{lemma:magical}, $S^1\gamma$ is $C^{k-1}$ (since $\gamma \subset S\Sigma$ is itself $C^{k-1}$ as an integral curve of the $C^{k-1}$-vector field $X$), and the restriction of an $H^{-1}(S\Sigma)$ distribution to $S^1\gamma$ is $H^{-1-1/2}$-regular (see Lemma \ref{reg} below); moreover, the contact form $\lambda$ is $C^{k-1}$, thus $\dd\lambda$ is $C^{k-2}$. The pairing $(S^1\gamma,f\dd\lambda)$ is then well-defined as long as $k-2 > 1+1/2$, that is $k \geq 4$. The end of the proof goes through as in the smooth case. Overall, one can run the arguments with $C^4$-regular metrics.}
\end{remark}

\begin{lemma}\label{reg}
Let $M$ be a closed manifold. If $u\in H^s(M)$ for $s\in \R$, $S$ is a smooth hypersurface and ${\rm WF}(u)\cap N^*S=\emptyset$, then 
the restriction $u|_{S}$ makes sense and belongs to $H^{s-1/2}(S)$.
\end{lemma}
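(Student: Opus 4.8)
The statement to prove is a standard microlocal restriction result: if $u \in H^s(M)$ and $\WF(u) \cap N^*S = \emptyset$ for a smooth hypersurface $S$, then the restriction $u|_S$ exists and lies in $H^{s-1/2}(S)$. The plan is to localize and reduce to a model computation in $\R^n$ using coordinates adapted to $S$.

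First I would work locally and use a partition of unity to reduce to the case where $S = \{x_n = 0\}$ in a coordinate chart $\R^{n-1}_{x'} \times \R_{x_n}$, writing $M = \R^n$ after straightening $S$. The conormal $N^*S$ is then the set of covectors of the form $(x', 0; 0, \xi_n)$ with $\xi_n \neq 0$. Since $\WF(u) \cap N^*S = \emptyset$ and $\WF(u)$ is closed and conic, I can find a conic neighborhood of the "bad" directions $\{\xi' = 0\}$ that avoids $\WF(u)$; after multiplying $u$ by a cutoff supported near $S$, I would split $u = u_1 + u_2$ where $u_1$ has frequency support in a conic region $|\xi'| \geq c|\xi_n|$ (the "good" directions, transverse to $N^*S$) and $u_2$ is smooth, constructed via pseudodifferential cutoffs in frequency. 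The smooth piece restricts trivially, so the whole problem concerns $u_1$.

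For the good piece, the key is a quantitative trace estimate. Writing $\hat{u}_1(\xi', \xi_n)$ for the Fourier transform, the restriction to $\{x_n=0\}$ has Fourier transform $\widehat{u_1|_S}(\xi') = (2\pi)^{-1}\int_\R \hat u_1(\xi',\xi_n)\, \dd\xi_n$. The main step is to bound $\|u_1|_S\|_{H^{s-1/2}}$ by $\|u_1\|_{H^s}$, which amounts to the estimate
\[
\int_{\R^{n-1}} \langle \xi'\rangle^{2s-1} \left| \int_\R \hat u_1(\xi',\xi_n)\,\dd\xi_n\right|^2 \dd\xi' \lesssim \int_{\R^n} \langle \xi\rangle^{2s}|\hat u_1(\xi',\xi_n)|^2\, \dd\xi'\dd\xi_n.
\]
This follows from Cauchy--Schwarz in the $\xi_n$ variable: on the frequency support of $u_1$ one has $\langle \xi\rangle \sim \langle(\xi',\xi_n)\rangle$ comparable to $\langle \xi_n\rangle$ when $\langle\xi'\rangle \lesssim \langle\xi_n\rangle$, and bounding $\int_\R \langle \xi_n\rangle^{-2s}\,\dd\xi_n$ against $\langle\xi'\rangle^{1-2s}$ requires the weight $\langle(\xi',\xi_n)\rangle^{2s}$ to be integrable in $\xi_n$, which is where the loss of half a derivative $s - 1/2$ appears and where one must be careful about the cone condition ensuring $\langle\xi_n\rangle$ does not dominate pathologically. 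The restriction of the frequency support to the good cone is exactly what makes this integral converge and produces the correct exponent.

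The main obstacle, and the crux of the argument, is the frequency decomposition: one must verify that the microlocal cutoff separating good from bad directions genuinely splits $u$ into a piece with controlled $H^s$ norm supported in the transverse cone plus a smooth remainder, and that this cutoff commutes appropriately with the coordinate straightening. I would invoke the standard fact that a properly supported pseudodifferential operator with symbol microsupported away from $N^*S$ maps $H^s$ to $H^s$ and that the remainder is smoothing on $\WF(u)$; this is precisely the content of the microlocal restriction theorem, and I would cite \cite[Chapter 8]{Hormander-90} (in particular the wavefront-set restriction theorem, Theorem 8.2.4 and its refinements) for the existence of the restriction, supplementing it with the elementary Cauchy--Schwarz trace estimate above to pin down the Sobolev order $s-1/2$. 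The statement as given is sharp and entirely standard, so the proof is really an exercise in assembling these two ingredients.
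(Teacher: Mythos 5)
Your proposal is correct and follows essentially the same route as the paper's proof: localize and straighten $S$, use the wavefront assumption to get rapid decay of $\hat{u}$ in a conic neighborhood of the conormal directions, and apply Cauchy--Schwarz in the normal frequency variable over the transverse cone, whose boundedness in that variable (the region $|\xi_{\mathrm{normal}}| \lesssim |\xi'|$) is what produces the factor $|\xi'|$ and hence the $H^{s-1/2}$ bound for every $s \in \R$. The only difference is packaging: the paper splits the integral $\int_{\R} \hat{u}\, \dd\xi_1$ into the two conic regions directly, with no pseudodifferential cutoff, while you decompose $u = u_1 + u_2$ first; your displayed trace estimate is exactly the paper's computation (and note that your intermediate remark about bounding $\int_{\R} \langle \xi_n\rangle^{-2s}\,\dd\xi_n$ is imprecise as written, since that integral is independent of $\xi'$ and diverges for $s \leq 1/2$ --- it is the restriction to the cone, as you then say, that makes the argument work for all $s$).
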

\begin{proof}
The restriction makes sense by the wavefront set condition.
Using charts, it suffices to assume that $u\in H^s(\R^n)$ has compact support and $S=\{x_1=0\}$. Let $A > 0$. By the wavefront set assumption, for all $N >0$, there exists $C_N >0$ such that
$|\hat{u}(\xi_1,\xi')|\leq C_N\langle \xi\rangle^{-N}$ for all $|\xi_1|>A|\xi'|$, where $\xi=(\xi_1,\xi')$.
Set $f := u|_{S}$. We can thus write $2\pi\hat{f}(\xi')=\int_{\R} \hat{u}(\xi_1,\xi')d\xi_1$. Thus, for all $N>0$, 
there is $C_N' >0$ (depending linearly on $u$) such that
\[\begin{split}
2\pi|\hat{f}(\xi')| \leq & \int_{|\xi_1|\leq A|\xi'|} |\hat{u}(\xi)|d\xi_1+  \int_{|\xi_1|> A|\xi'|} C_N\langle \xi\rangle^{-N} d\xi_1\\
\leq &\Big(\int_{|\xi_1|\leq A|\xi'|} |\hat{u}(\xi)|^2d\xi_1\Big)^{1/2}(2A |\xi'|)^{1/2}+C'_N \langle \xi'\rangle^{-N+1}.
\end{split}\]
Therefore, taking $N$ large enough, one has for some $C_N''>0$ (depending linearly on $u$)
\[
\|f\|^2_{H^{s-1/2}} = \int_{\R^n} |\hat{f}(\xi')|^2\langle \xi'\rangle^{2s-1}d\xi'\leq   2A\int_{\R^n} |\hat{u}(\xi)|^2 \langle \xi'\rangle^{2s} d\xi_1d\xi'+C_N'' < \infty.
\]
This proves the claim.
\end{proof}

\bibliographystyle{alpha}
\bibliography{Biblio}

\end{document}